\begin{document}

\newtheorem{theorem}[subsection]{Theorem}
\newtheorem{proposition}[subsection]{Proposition}
\newtheorem{lemma}[subsection]{Lemma}
\newtheorem{corollary}[subsection]{Corollary}
\newtheorem{conjecture}[subsection]{Conjecture}
\newtheorem{prop}[subsection]{Proposition}
\numberwithin{equation}{section}
\newcommand{\dif}{\mathrm{d}}
\newcommand{\intz}{\mathbb{Z}}
\newcommand{\ratq}{\mathbb{Q}}
\newcommand{\natn}{\mathbb{N}}
\newcommand{\comc}{\mathbb{C}}
\newcommand{\rear}{\mathbb{R}}
\newcommand{\prip}{\mathbb{P}}
\newcommand{\uph}{\mathbb{H}}
\newcommand{\fief}{\mathbb{F}}
\newcommand{\majorarc}{\mathfrak{M}}
\newcommand{\minorarc}{\mathfrak{m}}
\newcommand{\sings}{\mathfrak{S}}
\newcommand{\fA}{\ensuremath{\mathfrak A}}
\newcommand{\mn}{\ensuremath{\mathbb N}}
\newcommand{\half}{\tfrac{1}{2}}
\newcommand{\f}{f\times \chi}
\newcommand{\summ}{\mathop{{\sum}^{\star}}}
\newcommand{\chiq}{\chi \bmod q}
\newcommand{\chidb}{\chi \bmod db}
\newcommand{\chid}{\chi \bmod d}
\newcommand{\sym}{\text{sym}^2}
\newcommand{\hhalf}{\tfrac{1}{2}}
\newcommand{\sumstar}{\sideset{}{^*}\sum}
\newcommand{\sumprime}{\sideset{}{'}\sum}
\newcommand{\sumprimeprime}{\sideset{}{''}\sum}
\newcommand{\shortmod}{\ensuremath{\negthickspace \negthickspace \negthickspace \pmod}}
\newcommand{\V}{V\left(\frac{nm}{q^2}\right)}
\newcommand{\sumi}{\mathop{{\sum}^{\dagger}}}
\newcommand{\mz}{\ensuremath{\mathbb Z}}
\newcommand{\leg}[2]{\left(\frac{#1}{#2}\right)}

\title{One level density of low-lying zeros of families of $L$-functions}
\date{\today}
\author{Peng Gao and Liangyi Zhao}
\maketitle

\begin{abstract}
In this paper, we prove some one level density results for the
low-lying zeros of families of $L$-functions.  More specifically,
the families under consideration are that of $L$-functions of
holomorphic Hecke eigenforms of level 1 and weight $k$ twisted
with quadratic Dirichlet characters and that of cubic and quartic
Dirichlet $L$-functions.
\end{abstract}

\noindent {\bf Mathematics Subject Classification (2010)}: 11F11, 11L40, 11M06, 11M26, 11M50, 11R16  \newline

\noindent {\bf Keywords}: one level density, low-lying zeros, Hecke eigenforms, quadratic Dirichlet characters, cubic Dirichlet characters, quartic Dirichlet characters

\tableofcontents

\section{Introduction}

The density conjecture of N. Katz and P. Sarnak \cites{KS1, K&S} suggests that the
distribution of zeros near the central point of a family of
$L$-functions is the same as that of eigenvalues near $1$ of a
corresponding classical compact group.  This has been confirmed
for many families of $L$-functions, such as different types of
Dirichlet $L$-functions \cites{Gao, O&S, Ru, Miller1, HuRu}, $L$-functions with
characters of the ideal class group of the imaginary quadratic
field $\ratq(\sqrt{-D})$ \cite{FI}, automorphic $L$-functions \cites{ILS, DuMi2, HuMi, RiRo, Royer}, elliptic curves $L$-functions \cites{SBLZ1, HB1, Brumer, SJM, Young}, symmetric powers of $GL(2)$ $L$-functions \cite{Gu2, DM} and a family of $GL(4)$ and $GL(6)$
$L$-functions \cite{DM}.  Literatures in this direction are too
numerous to be completely listed here.  In this paper, we prove
some one level density results for the low-lying zeros of families
of $L$-functions of holomorphic Hecke eigenforms of level 1 and
weight $k$ twisted with quadratic Dirichlet characters and those
of families of cubic and quartic Dirichlet $L$-functions.
\newline

Let $\chi$ be a primitive Dirichlet character and we denote
the non-trivial zeroes of the Dirichlet $L$-function
   $L(s, \chi)$ by $\half+i \gamma_{\chi, j}$.  Without assuming the generalized Riemann hypothesis (GRH), we order them as
\begin{equation}
\label{zeroorder}
    \ldots \leq
   \Re \gamma_{\chi, -2} \leq
   \Re \gamma_{\chi, -1} < 0 \leq \Re \gamma_{\chi, 1} \leq \Re \gamma_{\chi, 2} \leq
   \ldots.
\end{equation}
    For any primitive Dirichlet character $\chi$
    of conductor $q$ of size $X$, we set
    \[ \tilde{\gamma}_{\chi, j}= \frac{\gamma_{\chi, j}}{2 \pi} \log X \]
and define for an even Schwartz class function $\phi$,
\begin{equation} \label{Sdef}
S(\chi, \phi)=\sum_{j} \phi(\tilde{\gamma}_{\chi, j}).
\end{equation}

In \cite{O&S},  A. E. \"{O}zluk and C. Snyder studied the family
of quadratic Dirichlet $L$-functions.  Consider the family of
Dirichlet $L$-functions of the form $L(\chi_{8d},s)$ for $d$ odd
and square-free with $X\le d \le 2X$, where
$\chi_{8d}=(\frac{8d}{\cdot})$ is the Kronecker symbol.  Let
$D(X)$ denote the set of such $d$'s. It's easy to see, as noted in \cite{Gao}, that
 \[ \# D(X) \sim \frac{4X}{\pi^2}. \]
Assuming GRH for this family, it follows from the result of
\"{O}zluk and Snyder \cite{O&S} that for
$W_{USp}(x)=1-\frac{\sin(2\pi x)}{2\pi x}$, we have
\begin{align}
\label{1} \frac{1}{ \# D(X)}\sum_{d \in D(X)} S(\chi_{8d}, \phi)
\sim \int_{\rear} \phi(x) W_{USp}(x) \dif x,
\end{align}
as $X \rightarrow \infty$ provided that the support of $\hat{\phi}$, the Fourier
transform of $\phi$, is contained in the interval $(-2, 2)$.  The expression on the left-hand side of \eqref{1} is known as the one-level density of the low-lying zeros for this family of $L$-functions under consideration.\newline

The kernel of the integral $W_{USp}$ in \eqref{1} is the same function which occurs
on the random matrix theory side, when studying the eigenvalues of
unitary symplectic matrices. This shows that the family of
quadratic Dirichlet $L$-functions is a symplectic family.
  In \cite{Ru}, M. O. Rubinstein studied all the $n$-level
  densities of the low-lying zeros of the families of
quadratic twists of $L$-functions attached to a
self-contragredient automorphic cuspidal representation, as well as
the family of quadratic Dirichlet $L$-functions.  He showed that
they converge to the symplectic densities for test functions
  $\phi(x_1,\ldots, x_n)$ whose Fourier transforms $\hat{\phi}(u_1,\ldots, u_n)$ have their supports contained in the set
\begin{equation*}
 \left\{ (u_1, \cdots, u_n) \in \rear^n :   \sum^n_{i=1}|u_i|<1 \right\}.
 \end{equation*}
The result of Rubinstein does not assume
  GRH.  In \cite{Gao}, assuming the truth of GRH, the first-named author extended Rubinstein's result \cite{Ru} and showed that it holds for $\hat{\phi}(u_1,\ldots, u_n)$ with support in the set
\[ \left\{ (u_1, \cdots, u_n) \in \rear^n :   \sum^n_{i=1}|u_i|<2 \right\}. \]

In this paper, we consider a few other families of $L$-functions. First, let $f$ be a fixed
holomorphic Hecke eigenform of level $1$ and weight $k$. For
$\Im(z)>0$ we have a Fourier expansion of $f$:
\begin{align*}
f(z)=\sum_{n=1}^{\infty} a_f(n)n^{\frac{k-1}{2}}e (n z),
\end{align*}
where the coefficients $a_f(n)$ are real and normalized with $a_f(1)=1$ and satisfy the
Ramanujan-Petersson bound
\[ |a(n)|\le d(n), \]
with $d(n)$ being the divisor function. Let $\chi$ be a primitive Dirichlet character
of conductor $q$.  The $L$-function of the twist of $f$ by
$\chi$ is given by
\begin{align}
\label{euler} L(\f,s)= \sum_{n=1}^{\infty}
\frac{a_f(n)\chi(n)}{n^s}=\prod_{p}
\left(1-\frac{a_f(p)\chi(p)}{p^{s}}+\frac{\chi(p^2)}{p^{2s}}\right)^{-1}
\end{align}
for $\Re(s)>1$. This $L$-function continues to an entire function
and satisfies the functional equation
\begin{equation*}
\left( \frac{q}{2\pi} \right)^{s}\Gamma \left(\frac{k}{2}+s \right)L \left( f,\frac{1}{2} +s \right)=\imath_{\chi} \left( \frac{q}{2\pi} \right)^{-s}\Gamma \left( \frac{k}{2}-s \right )L \left( f\times
\overline{\chi},\frac{1}{2}-s \right).
\end{equation*}
where $\imath_{\chi}=i^k \tau(\chi)^2/q$ and $\tau(\chi)$
is the Gauss sum associated to $\chi$ (thus $|\imath_{\chi}|=1$).
cf. \cite[Prop. 14.20]{iwakow}.  See \cites{iwakow, HI3} for detailed discussions of Hecke eigenforms. \newline

  For a fixed $f$, we consider the family of quadratic twists of
  $L$-functions $L(f \times \chi_{8d},s)$ for $d$
odd and square-free with $X\le d \le 2X$. For this family,
Rubinstein \cite{Ru} has shown that the $n$-level densities of the
low-lying zeros converge to the orthogonal densities for test
functions $\phi(x_1,\ldots, x_n)$ whose Fourier transforms
$\hat{\phi}(u_1,\ldots, u_n)$ are supported in the set
\begin{equation*}
 \left\{ (u_1, \cdots, u_n) \in \rear^n :   \sum^n_{i=1}|u_i|< \frac{1}{2} \right\}.
 \end{equation*}
  More
precisely, we denote the zeroes of $L(f \times \chi_{8d},s)$ by
$\half+i \gamma_{f, 8d, j}$, and order them in a manner similar to \eqref{zeroorder}.  Let $\tilde{\gamma}_{f, 8d,j}= \gamma_{f,
8d,j} 2\log X/(2\pi) $ and define for an even Schwartz function
$\phi$,
\begin{align*}
D(d, f, \phi)=\sum_{j} \phi(\tilde{\gamma}_{f, 8d,j}).
\end{align*}
  We set
  \[ W_{SO+}(x)=1+\frac{\sin(2\pi x)}{2\pi x}, \; W_{SO-}(x)=\delta(x)+1-\frac{\sin(2\pi x)}{2\pi x} \; \mbox{and} \;
  W_{O}(x)=\frac{1}{2} \left( W_{SO+}(x)+W_{SO-}(x) \right) , \]
  where $\delta_0(x)$
  is the Dirac distribution at $x=0$.   Note that the three orthogonal densities
are indistinguishable for test functions whose Fourier transforms
are supported in $(-1, 1)$. The result of Rubinstein asserts that
\begin{align*}
 \frac{1}{\# D(X)}\sum_{d \in D(X)} D(d, f, \phi) \sim
\int_{\mathbb{R}} \phi(x) \left( 1+\frac{\sin(2\pi x)}{2\pi x} \right) \dif x,
\end{align*}
as $X \rightarrow \infty$ provided that $\hat{\phi}$ is supported
on the interval $(-1/2, 1/2)$. This shows that the family $L(f
\times \chi_{8d},s)$ has orthogonal symmetry.
\newline

  In this paper, we improve the above-mentioned result of Rubinstein by
  doubling the size of the allowable support of the Fourier transform of the test
  function in the case of the one level density. For technical reasons, we consider the average over the family by a smooth
  weight. Let $\Phi_X(t)$ be a non-negative smooth function supported on $(1,2)$,
    satisfying $\Phi_X(t)=1$ for $t \in (1+1/U, 2-1/U)$ with $U=\log X$ and such that
    $\Phi^{(j)}_X(t) \ll_j U^j$ for all integers $j \geq 0$. Our result is
\begin{theorem} \label{quadraticmainthm}
Suppose that GRH is true. Let $f$ be a fixed holomorphic Hecke
eigenform of level $1$ and weight $k$. Let $\phi(x)$ be an even
Schwartz function whose Fourier transform $\hat{\phi}(u)$ has
compact support in $(-1, 1)$.  Then we have
\begin{align} \label{theo1eq}
 \lim_{X \rightarrow +\infty}\frac{1}{\# D(X)}\sum_{d \in D(X)} \Phi_X \left( \frac {d}{X} \right) D(d, f, \phi)
 = \int_{\mathbb{R}} \phi(x) \left(1+\frac{\sin(2\pi x)}{2\pi x} \right) \dif x.
\end{align}
\end{theorem}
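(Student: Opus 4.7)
The argument centers on the explicit formula for $L(f \times \chi_{8d}, s)$. Under GRH it yields, for any $d$ in the support of $\Phi_X$,
\begin{align*}
 D(d, f, \phi) = \hat\phi(0) - \tfrac{2}{\log X} \sum_{m \geq 1} \sum_p \tfrac{\lambda_f(p^m) \chi_{8d}(p)^m \log p}{p^{m/2}}\, \hat\phi\!\left(\tfrac{m \log p}{\log X}\right) + O\!\left(\tfrac{1}{\log X}\right),
\end{align*}
where $\lambda_f(p^m) = \alpha_f(p)^m + \beta_f(p)^m$ are the Satake-trace coefficients (so $\lambda_f(p) = a_f(p)$ and $\lambda_f(p^2) = a_f(p)^2 - 2$), and the error term absorbs the archimedean and conductor factors since $\log(8d) = \log X + O(1)$ on the support of $\Phi_X$. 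The $m \geq 3$ sum is $O(1/\log X)$ by Deligne's bound. The $m = 2$ sum is essentially $d$-independent, since $\chi_{8d}(p)^2 = 1$ for $p \nmid 8d$; partial summation combined with the prime number theorem for $L(\mathrm{sym}^2 f, s)$ (a consequence of GRH) converts this $m=2$ contribution into $\int_{\mathbb{R}} \phi(x) \frac{\sin(2\pi x)}{2\pi x}\, dx + o(1)$, which together with $\hat\phi(0)$ produces exactly the density $W_{SO+}$ on the right-hand side of \eqref{theo1eq}, modulo the $m = 1$ term.

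The substantive task is showing that the averaged $m = 1$ contribution
\begin{align*}
 T_1(X) = \frac{2}{(\log X)\,\#D(X)} \sum_{d \in D(X)} \Phi_X\!\left(\tfrac{d}{X}\right) \sum_p \tfrac{a_f(p) \chi_{8d}(p) \log p}{\sqrt p}\, \hat\phi\!\left(\tfrac{\log p}{\log X}\right)
\end{align*}
tends to $0$ as $X \to \infty$ when $\mathrm{supp}\,\hat\phi \subset (-1,1)$. I would swap the order of summation, remove the square-free restriction via $\mu^2(d) = \sum_{a^2 \mid d} \mu(a)$ (with $a$ odd forced by $d$ odd), and truncate at $a \leq X^\eta$ for a small $\eta > 0$; the tail is handled trivially by $|a_f(p)| \leq 2$. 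Writing $d = a^2 b$, the inner sum over $b$ runs over an interval of length $\asymp X/a^2$ against the quadratic character $\chi_{8a^2 b}(p) = \left(\tfrac{8}{p}\right)\left(\tfrac{b}{p}\right)$. Poisson summation modulo $p$ on the variable $b$ is decisive: the zero dual frequency vanishes by orthogonality of $\left(\tfrac{\cdot}{p}\right)$, while each nonzero dual $k$ contributes a Gauss sum of size $\sqrt p$ times $\left(\tfrac{k}{p}\right)$, weighted by $\hat\Phi_X\!\left(\tfrac{kX}{a^2 p}\right)$, which localizes $|k| \ll a^2 p (\log X)^{1+\epsilon}/X$.

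After this dualization, bounding $T_1(X)$ reduces to estimating, for each admissible $a$ and $k \neq 0$, an inner prime sum of the shape
\begin{align*}
 S_{a,k}(X) = \sum_p \tfrac{a_f(p) \left(\tfrac{k}{p}\right) \log p}{p}\, \hat\phi\!\left(\tfrac{\log p}{\log X}\right) \hat\Phi_X\!\left(\tfrac{kX}{a^2 p}\right).
\end{align*}
This is the logarithmic derivative of $L(f \times \psi_k, s)$ against a smooth window, where $\psi_k = \left(\tfrac{k}{\cdot}\right)$ is the Kronecker symbol associated to $k$. Under GRH for $L(f \times \psi_k, s)$, a Mellin inversion and contour shift to $\Re s = 1/2$, controlled by the sum of $|\hat g(\rho)|$ over critical-line zeros of this degree-$2$ $L$-function, yields $S_{a,k}(X) \ll_\epsilon (|k|X)^\epsilon$ uniformly. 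Re-assembling with the $k$-sum length, $\sum_a a^{-2} = O(1)$, and the normalization $1/\#D(X) \asymp 1/X$ delivers $T_1(X) = o(1)$ precisely when $\mathrm{supp}\,\hat\phi \subset (-1,1)$. The principal obstacle is the uniformity of the GRH-based estimate on $S_{a,k}$ as $|k|$ and the Kronecker modulus of $\psi_k$ grow: the support $(-1,1)$ is the sharp threshold at which the Poisson-dualization combined with GRH for the whole family of twisted degree-$2$ $L$-functions just suffices, and extending beyond would require essentially new inputs such as $\mathrm{GL}(2)$ subconvexity or averaged pair-correlation information.
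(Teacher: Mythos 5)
Your proposal follows the same overall route as the paper: pass through the explicit formula, extract the $\sin(2\pi x)/(2\pi x)$ contribution from the prime-square terms, dismiss the higher prime powers by Deligne's bound, remove the square-free condition on $d$ with $\mu^2(d) = \sum_{a^2 \mid d}\mu(a)$, Poisson-dualize the $d$-sum modulo $p$, and finish with GRH bounds for twisted Hecke $L$-functions on the dualized prime sums. That is the paper's plan as well (its Poisson step is Soundararajan's Lemma 2.6 applied to the truncation $M_Z$ of $\mu^2$).

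There is, however, a genuine gap in your treatment of the tail of the square-free detection. You truncate at $a \le X^{\eta}$ for a \emph{small} $\eta > 0$ and assert that the tail $a > X^{\eta}$ is ``handled trivially by $|a_f(p)| \le 2$.'' That bound is not sufficient. The inner prime sum satisfies $\sum_{p \le Y}\frac{\log p}{\sqrt p} \asymp \sqrt{Y} = X^{1-\epsilon}$ (with $Y = X^{2-2\epsilon}$ from the support of $\hat\phi$), and the number of $d \in (X,2X)$ whose square part exceeds $X^{2\eta}$ is $\asymp X^{1-\eta}$. So the trivial tail bound is $\asymp X^{2-\eta-\epsilon}$, and after dividing by $\#D(X)\log X \asymp X\log X$ one is left with $X^{1-\eta-\epsilon}/\log X$, which does not tend to $0$ for small $\eta$ and small $\epsilon$. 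You would need $\eta > 1-\epsilon$ for the trivial bound to win; but the Poisson-dualized main term, with a truncation $a \le X^{\eta}$, imposes roughly $\eta < \epsilon$, so the two constraints are only compatible when $\epsilon > 1/2$, i.e., with $\mathrm{supp}\,\hat\phi \subset (-1/2,1/2)$ — Rubinstein's range, not the claimed $(-1,1)$. The paper avoids this by invoking GRH in the tail as well: its Lemma 3.2 gives $\sum_{p\le x} a_f(p)\chi(p)\log p \ll x^{1/2}\log^2(qx)$ uniformly in the modulus, whence (after partial summation) each tail character contributes only $O(\log^3 X)$, so the tail $S_R \ll X\log^3 X / Z$, and $Z$ can be taken polylogarithmic ($Z = \log^3 X$). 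So GRH must be used in both the tail and the dualized main part; the tail is not ``trivial.''

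A minor point: the $m=2$ contribution does not require GRH for $L(\mathrm{sym}^2 f, s)$. The paper obtains it unconditionally from Proposition 2.3 of Rudnick--Sarnak, which gives $\sum_{p\le x} a_f(p)^2 (\log^2 p)/p = \tfrac12\log^2 x + O(\log x)$ via Rankin--Selberg, together with Mertens' formula and the Hecke relation $a_f(p^2) = a_f(p)^2 - 2$.
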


We shall describe briefly the proof of
Theorem~\ref{quadraticmainthm} in this paragraph. Using a modified
version of the explicit formula, Lemma~\ref{lem2.2.1},
$D(d,f,\phi)$ in \eqref{theo1eq}, a sum over the zeros of the
$L$-function under consideration, is converted into a sum over
primes and prime powers.  The most important among these will be
the sums over primes (the one appearing in \eqref{realprimeSdef})
and prime squares, after showing that the contributions of the
higher prime powers are negligible.   The sum involving prime
squares can be easily handled.  Thus far, each of the sums can be
disposed of for each individual $L$-functions in question, without
appealing to the additional averaging over $d$ in \eqref{theo1eq}.
But this additional sum over the family is needed in the treatment
of the more difficult sum involving primes, \eqref{realprimeSdef}.
Following a method of K. Soundararajan \cite{sound1}, after
detecting the square-free condition of $d$ using the M\"obius
function and applying Poisson summation formula to the sum over
$d$, we are led to consider sums involving quadratic Gauss sums,
which is further split into pieces according to the size of the
relevant parameters.  GRH is needed to estimate a component of
these sums. Other estimates are quoted from \cites{Miller1} to
give the final estimate for the sum in \eqref{realprimeSdef}.
Theorem~\ref{quadraticmainthm} follows after combining all
estimates.\newline

We note that the one level density of the low-lying zeroes of the
family of quadratic twists of Hecke $L$-functions evaluated in
Theorem \ref{quadraticmainthm} is insufficient to determine which
of the orthogonal symmetry types, $SO(\text{even})$, $O$ or
$SO(\text{odd})$, is attached of each family,
due to the small support-restriction on the Fourier transforms of
test functions.  However, as shown in \cites{DM, SJM}, the two level
density allows one to distinguish between the three orthogonal
symmetries for test functions whose Fourier transforms have
arbitrarily small support.  In fact, it follows from Rubinstein's result on two-level density of this family (Lemma 7 in \cite{Ru} and also the formula on the top of page 179 in \cite{Ru}) that the symmetry type attached to the family of quadratic twists of $L$-functions $L(f \times \chi_{8d},s)$ for $d$ odd and square-free with $X\le d \le 2X$ is $SO(\text{even})$.  \newline

In addition, we consider the one level density of the low-lying
zeros of the families of cubic and quartic Dirichlet $L$-functions
in this paper. \newline

For a primitive cubic Dirichlet character $\chi$ of conductor $q$ coprime to $3$, it is shown in \cite{B&Y} that $q$ must be square-free and a product of primes congruent to $1$ modulo $3$. It follows that $\chi$ is a product of primitive cubic characters modulo the prime divisors of $q$ and for each prime divisor $p$ of $q$, there are exactly two primitive characters with conductor $p$, each being the square (also the complex conjugate) of the other. \newline

We shall prove the following:
\begin{theorem}
\label{mainthm}
   Let $f(x)$ be an even Schwartz function whose Fourier transform $\hat{f}(u)$ has compact
support in $(-3/7, 3/7)$, then
\begin{align*}
 \lim_{X \rightarrow +\infty}\frac{1}{ \# C(X) }\sum_{X \leq q \leq 2X}\ \sumstar_{\substack{\chi \shortmod{q} \\ \chi^3 = \chi_0}} S(\chi, f)
 = \int_{\mathbb{R}} f(x) \dif x.
\end{align*}
Here the ``$*$'' on the sum over $\chi$ means that the sum is
restricted to primitive characters and $C(X)$ denotes the set of
primitive cubic characters of conductor $q$ not divisible by 3 and
$X \leq q \leq 2X$.
\end{theorem}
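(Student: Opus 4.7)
The proof will follow the explicit-formula strategy used for Theorem~\ref{quadraticmainthm}. Applying an explicit formula for $L(s,\chi)$ analogous to Lemma~\ref{lem2.2.1}, one converts $S(\chi, f)$ into a main term plus a sum over prime powers. Averaging the main term over $\chi \in C(X)$ and using $\log q \sim \log X$ on $q \in [X, 2X]$ produces $\hat f(0) = \int_{\rear} f(x)\,\dif x$, which matches the right side of the conclusion. It therefore suffices to show
\begin{equation*}
\mathcal{T}(X) := \frac{1}{\#C(X)\,\log X}\, \sum_{X \le q \le 2X}\, \sumstar_{\chi^3 = \chi_0}\, \sum_{n \geq 1} \frac{\Lambda(n)\bigl(\chi(n) + \overline{\chi}(n)\bigr)}{\sqrt n}\, \hat f\!\left( \frac{\log n}{\log X} \right) = o(1)
\end{equation*}
as $X \to \infty$, whenever $\hat f$ is supported in $(-3/7, 3/7)$. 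The contribution of prime powers $n = p^k$ with $k \geq 3$ is $O(1/\log X)$ by absolute convergence.

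Because $\chi^3 = \chi_0$, one has $\chi(p^2) = \overline{\chi}(p)$ whenever $(p,q) = 1$, so the prime-square terms have the same character-theoretic shape as the $n=p$ terms but with weight $p^{-1}$ in place of $p^{-1/2}$ and the support of $\hat f$ restricted to $p \leq X^{3/14}$; they are therefore strictly dominated by the prime sum. The heart of the argument is the prime sum, where, after switching orders of summation, one must bound the character sum
\begin{equation*}
\mathcal{S}(p, X) := \sum_{X \leq q \leq 2X}\, \sumstar_{\chi^3 = \chi_0} \chi(p), \qquad p \le X^{3/7},
\end{equation*}
uniformly in $p$ with a power-saving gain over the trivial bound $|\mathcal{S}(p, X)| \ll \#C(X)$. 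For this I would invoke the identification, as in \cite{B&Y}, of primitive cubic Dirichlet characters of conductor $q$ (squarefree and composed of primes $\equiv 1 \pmod 3$) with cubic residue characters $\chi_{\mathfrak q}$ on $\mathbb Z[\omega]$, where $N(\mathfrak q) = q$. Cubic reciprocity then recasts $\mathcal{S}(p, X)$ as a sum of a fixed cubic character modulo the primes of $\mathbb Z[\omega]$ above $p$ over primary elements $\mathfrak q$ of norm $\asymp X$. The P\'olya--Vinogradov inequality in $\mathbb Z[\omega]$, sharpened by Heath-Brown's cubic large sieve and the Heath-Brown--Patterson bounds on moments of cubic Gauss sums, yields a power-saving estimate for $\mathcal{S}(p, X)$; combined with the asymptotic $\#C(X) \gg X/(\log X)^{1/2}$ from \cite{B&Y}, this delivers $\mathcal{T}(X) = o(1)$ throughout the support range $|u| < 3/7$.

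The principal obstacle is obtaining the cubic character sum bound for $\mathcal{S}(p, X)$ uniformly for $p$ close to the upper limit $X^{3/7}$, where the conductor $q \asymp X$ and the prime $p$ are of comparable size on the relevant square-root scale. In this regime, pure orthogonality or Selberg--Delange arguments, which exploit the mean-zero property of $\chi_\ell(p) + \overline{\chi_\ell}(p)$ averaged over primes $\ell \equiv 1 \pmod 3$, save only logarithmic factors and are insufficient; genuine cancellation extracted from cubic reciprocity and the analytic theory of cubic metaplectic Eisenstein series is required, and the specific break-even point $3/7$ emerges from balancing the diagonal and off-diagonal terms in this large-sieve analysis against the size of $\#C(X)$. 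Once $\mathcal{S}(p, X)$ is controlled, the remaining tasks (the prime-square contribution, imprimitive terms, and the gamma-factor and trivial-zero pieces from the explicit formula) are routine, paralleling the treatment for Theorem~\ref{quadraticmainthm}.
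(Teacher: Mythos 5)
Your sketch correctly captures the opening moves: the explicit formula (Lemma~\ref{lem2.2.1}), the observation that $\chi(p^2)=\overline{\chi}(p)$ puts the prime-square sum in the same shape as the prime sum, and the translation of $\sum_q \sumstar_{\chi^3=\chi_0}\chi(p)$ into a cubic residue symbol sum over $n\in\mathbb{Z}[\omega]$ via the classification in \cite{B&Y}. Where you diverge is at the heart of the matter, the character sum estimate. The paper does not touch the cubic large sieve, Heath-Brown--Patterson Gauss-sum moment bounds, or metaplectic Eisenstein theory. After three successive M\"obius inversions (to strip off the ``no rational prime divisor'' condition, the squarefree condition on $n$, and the ensuing $\gcd(n,d)=1$ condition), the inner sum reduces to a primitive Hecke character summed over all ideals of $\mathbb{Z}[\omega]$ of bounded norm, and that is controlled by the elementary P\'olya--Vinogradov bound for ideal characters due to Landau (Lemma~\ref{PVbounds}), $\sum_{N(I)\leq X}\xi(I)\ll X^{1/3}p^{2/3}\log^2 p$. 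Reassembling yields $\sumprime_n \leg{p}{n}_3 \ll X^{1/2+\epsilon}p^{2/3}$, and after a Cauchy--Schwarz in $p$ together with $\sum_{p\leq Y}|\cdot|^2\ll X^{1+\epsilon}Y^{7/3}$, the threshold $Y^{7/6}<X^{1/2}$, i.e.\ $Y<X^{3/7}$, falls out transparently; that is where $3/7$ comes from, not from a large-sieve diagonal/off-diagonal balance.

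Two further points. First, the paper's own Notes section observes that the Jutila-type mean-value estimate used for the quadratic case in \cite{Miller1} has no known cubic analogue, which is exactly why the authors took the elementary P\'olya--Vinogradov route. Your appeal to ``cubic large sieve'' technology therefore invokes machinery the authors themselves concluded was not available in the form needed here; if you want to make your route rigorous you must specify precisely which inequality is used and verify it yields the claimed power saving, and if it did, one would expect to obtain a support wider than $(-3/7,3/7)$, which should have been a warning sign that the calibration is off. Second, $\#C(X)\sim cX$ (quoted in the paper from \cite{David}), not merely $\gg X/(\log X)^{1/2}$; the latter is the order of the count of admissible \emph{conductors}, not of primitive cubic \emph{characters}, and conflating the two is a small but telling slip.
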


We point out here that Theorem \ref{mainthm} is analogous to a
result of A. M. G\"ulo\u{g}lu \cite{Gu} and that his result depends
on GRH while Theorem~\ref{mainthm} does not.  G\"ulo\u{g}lu
\cite{Gu} studied the one level density of the low-lying zeros of
a family of Hecke $L$-functions associated with cubic symbols
$\chi_c=(\frac {\cdot}{c})_3$ with $c$ square-free and congruent
to 1 modulo 9, regarded as primitive ray class characters of the
ray class group $h_{(c)}$.  We recall here that for any $c$, the
ray class group $h_{(c)}$ is defined to be $I_{(c)}/P_{(c)}$,
where $I_{(c)} = \{ \mathcal{A} \in I, (\mathcal{A}, (c)) = 1 \}$
and $P_{(c)} = \{(a) \in P, a \equiv 1 \pmod{c} \}$ with $I$ and
$P$ denoting the group of fractional ideals in $K=\ratq (\omega)$
and the subgroup of principal ideals, respectively.  Here $\omega
= \exp (2 \pi i/3)$.  The Hecke $L$-function associated with
$\chi_c$ is defined for $\Re(s) > 1$ by
\begin{equation*}
  L(s, \chi_c) = \sum_{0 \neq \mathcal{A} \subset
  O_K}\chi_c(\mathcal{A})(N(\mathcal{A}))^{-s},
\end{equation*}
  where $\mathcal{A}$ runs over all non-zero integral ideals in $K$ and $N(\mathcal{A})$ is the
norm of $\mathcal{A}$. As shown by E. Hecke, $L(s, \chi_c)$ admits
analytic continuation to an entire function and satisfies a
functional equation.  We refer the reader to \cites{Gu, Luo} for a more detailed discussion of these Hecke characters and $L$-functions.  We denote non-trivial zeroes of $L(s,
\chi_c)$ by $\half+i \gamma_{\chi_c, j}$ and order them in a
fashion similar to \eqref{zeroorder}.  Let $C_{(9)}(X)$ stand for
the set of $\chi_c$'s with $c$ square-free, congruent to 1 modulo
9 and $X \leq N(c) \leq 2X$. We define $S(\chi_c, f)$ similarly as
$S(\chi, \phi)$ in \eqref{Sdef}. \newline

For the family of Hecke $L$-functions considered by G\"ulo\u{g}lu
\cite{Gu}, we can apply our approach to Theorem \ref{mainthm} to
obtain the following result without assuming GRH:
\begin{theorem}
\label{mainthmonK} Let $f(x)$ be an even Schwartz function whose
Fourier transform $\hat{f}(u)$ has compact support in $(-3/5,
3/5)$, then
\begin{align*}
 \lim_{X \rightarrow +\infty}\frac{1}{\# C_{(9)}(X)}\sumstar_{\substack{c \equiv 1 \shortmod{9} \\ X \leq N(c) \leq 2X}} S(\chi_c, f)
 = \int_{\mathbb{R}} f(x) \dif x.
\end{align*}
   Here the ``$*$'' on the sum over $c$ means that the sum is restricted to square-free elements $c$ of $\mathbb{Z}[\omega]$ .
\end{theorem}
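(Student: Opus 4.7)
The plan is to adapt the proof of Theorem~\ref{mainthm} from primitive cubic Dirichlet characters over $\mz$ to the cubic Hecke characters $\chi_c$ over $\mz[\omega]$. I would first derive an explicit formula for the Hecke $L$-function $L(s,\chi_c)$: a standard contour shift, combined with the self-conjugacy of the family $C_{(9)}(X)$ (since $c\equiv 1\pmod 9$ implies $\bar c\equiv 1\pmod 9$ and $\chi_{\bar c}=\overline{\chi_c}$), writes the averaged zero sum as
\[
\frac{1}{\#C_{(9)}(X)}\sumstar_{\substack{c\equiv 1\shortmod 9\\X\le N(c)\le 2X}}S(\chi_c,f)=\hat f(0)-\frac{1}{\log X}\sum_{m\ge 1}\sum_{\mathfrak p}\frac{\log N(\mathfrak p)\,\hat f\!\left(\frac{m\log N(\mathfrak p)}{\log X}\right)}{N(\mathfrak p)^{m/2}}\,A_m(\mathfrak p)+O\!\left(\frac{1}{\log X}\right),
\]
where $A_m(\mathfrak p)=\frac{1}{\#C_{(9)}(X)}\sumstar_c(\chi_c(\mathfrak p)^m+\overline{\chi_c(\mathfrak p)^m})$ and $\mathfrak p$ runs over prime ideals of $\mz[\omega]$. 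The family is expected to have unitary symmetry, whose density kernel is identically $1$, so the target right-hand side is precisely $\int f=\hat f(0)$; the proof thus reduces to showing that the averaged double sum is $o(1)$.

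The contributions with $m\ge 3$ are $O(1/\log X)$ by absolute convergence of $\sum_{\mathfrak p}N(\mathfrak p)^{-3/2}\log N(\mathfrak p)$ combined with $|A_m(\mathfrak p)|\le 2$. For $m=2$, cubicity of $\chi_c$ yields $\chi_c(\mathfrak p)^2=\overline{\chi_c(\mathfrak p)}$ on unramified $\mathfrak p$, so $A_2(\mathfrak p)=A_1(\mathfrak p)$ up to a negligible correction from ramified primes; the $m=2$ sum then has the same shape as the $m=1$ sum but is cut by $\hat f$ to $N(\mathfrak p)\le X^{3/10}$ with an extra $N(\mathfrak p)^{-1/2}$ decay, making it an easier instance of the $m=1$ analysis. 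The crux is therefore the $m=1$ sum, which by the support of $\hat f$ is restricted to $N(\mathfrak p)\le X^{3/5}$.

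Mirroring Soundararajan's method \cite{sound1} as implemented in Theorem~\ref{mainthm}, I would detect the square-free condition on $c$ via the identity $\mu_K^2(c)=\sum_{\ell^2\mid c}\mu_K(\ell)$, interchange summations, and then invoke cubic reciprocity in $\mz[\omega]$ to replace $\chi_c(\mathfrak p)$ by $\chi_{\mathfrak p}(c)$; the hypothesis $c\equiv 1\pmod 9$ ensures that $c$ is primary, so the reciprocity root-of-unity factor is trivial. Poisson summation on the lattice $\mz[\omega]$, applied to the $c$-sum over the arithmetic progression $1\pmod{9\ell^2}$ with $N(c)\in[X,2X]$, separates a zero-frequency main term, which vanishes by non-triviality of $\chi_{\mathfrak p}$ modulo $\mathfrak p$, from non-zero-frequency contributions expressible in terms of cubic Gauss sums.

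The main obstacle, and the reason the support threshold comes out to $3/5$, is the unconditional control of the resulting triple sum over $\ell$, $\mathfrak p$, and the Poisson dual variable $k$. In contrast to the quadratic family of Theorem~\ref{quadraticmainthm}, no appeal to GRH is needed here: the Weil bound $|g_3(k,\mathfrak p)|\le\sqrt{N(\mathfrak p)}$ on cubic Gauss sums over $\mz[\omega]$ already supplies the requisite cancellation, replacing the conditional input used by \cite{Gu}. A dyadic decomposition in the ranges $N(\ell)\ll X^{1/2}$ and $N(\mathfrak p)\ll X^{3/5}$, together with separation of the $k=0$ and $k\ne 0$ frequencies, shows that the worst-case balance is attained exactly at $N(\mathfrak p)\asymp X^{3/5}$, matching the hypothesis on $\hat f$. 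Assembling all of these estimates delivers the required $o(1)$ bound and hence Theorem~\ref{mainthmonK}.
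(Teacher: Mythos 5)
Your proposal has a genuine gap: you have misidentified the method used to prove Theorem~\ref{mainthm}, and the substitute mechanism you propose would not deliver the unconditional result the theorem asserts.

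You write that you will ``mirror Soundararajan's method \cite{sound1} as implemented in Theorem~\ref{mainthm}.'' But the paper implements Soundararajan's Poisson--plus--Gauss--sum method only for Theorem~\ref{quadraticmainthm}, and there it is \emph{conditional} on GRH (GRH enters through Lemma~\ref{lem3.2} to control the remainder $S_R$ after detecting squarefreeness). Theorems~\ref{mainthm}, \ref{mainthmonK} and \ref{quarticthm} are proved by an entirely different and simpler route that is precisely what makes them unconditional: after the explicit formula, one applies the Cauchy--Schwarz inequality to the sum over $\mathfrak p$ (as in \eqref{estimation1}--\eqref{estimation2}), which reduces the task to bounding a mean square of the form \eqref{cubicest}, and that mean square is estimated by regarding $(\frac{\mathfrak p}{\cdot})_3$ as a ray class character on $h_{(3)\mathfrak p}$ and invoking Landau's P\'olya--Vinogradov-type bound \cite{Landau} for ideal-character sums (Lemma~\ref{PVbounds} and \eqref{4.3}). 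The $5/3$ exponent in \eqref{cubicest}, and hence the $3/5$ support threshold, comes directly from that P\'olya--Vinogradov estimate followed by partial summation; no Poisson summation, no cubic Gauss sums, and no GRH appear anywhere in this argument.

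Your proposed replacement --- squarefree sieving, cubic reciprocity, Poisson summation over $\mz[\omega]$, and the Weil bound $|g_3(k,\mathfrak p)|\le\sqrt{N(\mathfrak p)}$ --- is essentially G\"ulo\u{g}lu's strategy \cite{Gu}, and he needs GRH precisely because after Poisson one is left with a sum over primes $\mathfrak p$ of a fixed cubic character, for which the only available unconditional input is again of P\'olya--Vinogradov strength; the Weil bound on the individual Gauss sums does not, by itself, produce any cancellation in the $\mathfrak p$-sum. Your claim that ``no appeal to GRH is needed here: the Weil bound \dots already supplies the requisite cancellation'' therefore does not hold up, and the assertion that the balance point ``comes out exactly at $N(\mathfrak p)\asymp X^{3/5}$'' is stated without the estimates that would justify it. The Cauchy--Schwarz step is not an optional packaging choice; it is the device that converts the $\mathfrak p$-sum into an $L^2$ quantity that a P\'olya--Vinogradov bound can control, and it is what makes the theorem true without GRH. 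Your outline omits it and consequently leaves the central estimate unproved.
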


The support obtained under GRH by G\"ulo\u{g}lu \cite{Gu} is $(-31/30, 31/30)$ in place of $(-3/5,3/5)$ in our unconditional Theorem ~\ref{mainthmonK}.   In addition, the result G\"ulo\u{g}lu's result is a smoothed version of one-level density and depends on other results that require this smoothness.  We also note that, as
\[ \int_{\mathbb{R}} f(x) \dif x=\int_{\mathbb{R}} f(x)W_U(x) \dif x, \; \mbox{with} \; W_U(x)=1, \] Theorems \ref{mainthm} and \ref{mainthmonK} show that the family of cubic Dirichlet $L$-functions as well as the family of Hecke $L$-functions  associated with cubic symbols are unitary families, an observation made in \cite{Gu}.  \newline

Analogous to Theorem \ref{mainthm}, we also study the one level
density of the low-lying zeros of the family of quartic Dirichlet
$L$-functions. We have the following
\begin{theorem}
\label{quarticthm} Let $f(x)$ be an even Schwartz function whose
Fourier transform $\hat{f}(u)$ has compact support in $(-3/7,
3/7)$, then
\begin{align*}
 \lim_{X \rightarrow +\infty}\frac{1}{\# Q(X)}\sum_{X \leq q \leq 2X}\sumstar_{\substack{\chi \shortmod{q} \\ \chi^4 = \chi_0, \chi^2 \neq \chi_0}} S(\chi, f)
 = \int_{\mathbb{R}} f(x) \dif x.
\end{align*}
   Here the ``$*$'' on the sum over $\chi$ means that the sum is restricted to primitive
   characters and $Q(X)$ denotes the set of primitive complex quartic characters with odd conductor q and $X \leq q \leq 2X$.
\end{theorem}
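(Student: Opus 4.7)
The plan is to follow the structure of the proof of Theorem~\ref{mainthm}, adapted to quartic characters. I would begin by applying the explicit formula (the analogue of Lemma~\ref{lem2.2.1}) to write, for each primitive complex quartic character $\chi \in Q(X)$ of conductor $q$,
\[
S(\chi,f) = \hat f(0)\,\frac{\log(q/\pi)}{\log X} - \frac{2}{\log X}\sum_{n\geq 1}\frac{\Lambda(n)}{\sqrt n}\,\hat f\!\left(\frac{\log n}{\log X}\right)\Re\chi(n) + O\!\left(\frac{1}{\log X}\right).
\]
Averaging the first term over $Q(X)$ and noting that $\log(q/\pi)/\log X \to 1$ as $X \to \infty$ yields $\hat f(0) = \int_{\mathbb R}f(x)\,dx$, the desired main term. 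The task reduces to showing that the contributions of $n = p$, $n = p^2$, and $n = p^k$ with $k\geq 3$, each averaged over $Q(X)$, tend to zero.

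The tail $k\geq 3$ is handled trivially using $|\chi(p^k)|\leq 1$ and the compact support of $\hat f$, giving an $O((\log\log X)/\log X)$ contribution uniformly in $\chi$. For the prime-square term, since $\chi^2 \neq \chi_0$ the character $\chi^2$ is a non-principal quadratic character modulo $q$, and the support restriction $\operatorname{supp}(\hat f)\subset(-3/7,3/7)\subset(-1,1)$ restricts $p \ll X^{1/2}$ in this term. A standard Mertens-type prime sum estimate combined with orthogonality of $\chi^2$ on average over $q$, as in the cubic case, then yields an $o(1)$ bound for this contribution.

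The main obstacle is the prime sum
\[
\mathcal P(X) = \frac{1}{\#Q(X)\log X}\sum_{X\leq q\leq 2X}\sumstar_{\substack{\chi\shortmod q \\ \chi^4=\chi_0,\,\chi^2\neq\chi_0}} \sum_{p}\frac{\log p}{\sqrt p}\,\hat f\!\left(\frac{\log p}{\log X}\right)\bigl(\chi(p)+\bar\chi(p)\bigr),
\]
where $p\ll X^{3/7}$ by the support of $\hat f$. Interchanging summations, the task reduces to bounding
\[
\mathcal A(p;X) = \sum_{X\leq q\leq 2X}\sumstar_{\substack{\chi\shortmod q \\ \chi^4=\chi_0,\,\chi^2\neq\chi_0}} \chi(p).
\]
I would parameterize every primitive complex quartic character by the quartic residue symbol $(\cdot/\pi)_4$ attached to a primary squarefree Gaussian integer $\pi\in\mathbb{Z}[i]$ with $N(\pi)=q$, so that $\mathcal A(p;X)$ becomes a sum over such $\pi$ with $N(\pi)\in[X,2X]$ (the primitivity and squarefree conditions being detected by M\"obius inversion in $\mathbb{Z}[i]$). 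Quartic reciprocity then converts $(p/\pi)_4$ into $(\pi/p)_4$ up to a root-of-unity factor depending only on $p\bmod 4$, so $\mathcal A(p;X)$ becomes a sum of a nontrivial quartic character in $\pi$ of modulus roughly $p$. The requisite cancellation is extracted from P\'olya--Vinogradov-type estimates for such character sums in $\mathbb{Z}[i]$, supplemented by the quartic large sieve inequality to handle the summation over $p$; the exponent $3/7$ arises precisely as the threshold at which this combined input produces a bound of $o(\#Q(X)\log X)$ for $\mathcal P(X)$. This step, the quartic analogue of the character sum treatment used in Theorem~\ref{mainthm}, is the principal technical hurdle.
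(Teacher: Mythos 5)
Your overall skeleton is correct and matches the paper's approach to the cubic case: explicit formula reduces $S(\chi,f)$ to sums over $p$ and $p^2$, you parameterize primitive complex quartic characters via the quartic residue symbol $(\cdot/n)_4$ with $n\in\mathbb{Z}[i]$ primary, square-free and with no rational prime divisor, detect the side conditions by M\"obius inversion, and then extract cancellation from a P\'olya--Vinogradov-type estimate. However, you deviate from the paper in two ways that deserve comment. First, you flip $(p/n)_4$ to $(n/p)_4$ via quartic reciprocity. The paper deliberately avoids reciprocity: it regards $(p/\cdot)_4$ directly as a primitive ray class group character on $h_{(2p)}$ of $\mathbb{Q}(i)$ and applies Landau's bound $\sum_{N(I)\le X}\xi(I)\ll k^{1/3}\log^2(k)\,X^{1/3}$ with $k\asymp p^2$ to obtain $X^{1/3}p^{2/3}\log^2 p$. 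Your reciprocity route introduces root-of-unity factors depending on $n$ (through the supplementary laws for $(1+i)$, $i$, $2$), and requires separate treatment of $p\equiv 1$ and $p\equiv 3 \pmod 4$ according to the splitting of $p$ in $\mathbb{Z}[i]$; it is not merely a constant ``depending only on $p\bmod 4$''. It can in principle be made to work and gives the same bound, but it is both heavier and unnecessary. Second, you invoke a ``quartic large sieve inequality'' to handle the sum over $p$. The paper does no such thing: it uses Cauchy--Schwarz to peel off a factor $(\sum_p \log^2 p/p)^{1/2}\ll\log X$ and then inserts the P\'olya--Vinogradov bound for each $p$ individually, giving $\sum_{p\le Y}\bigl|\sum_n (p/n)_4\bigr|^2\ll X^{1+\epsilon}Y^{7/3}$ and hence support $(-3/7,3/7)$. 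Indeed the paper's closing Notes explicitly say that mean-value estimates (Jutila-type) for characters of order $>2$ are what would be needed to push beyond $3/7$ and ``do not seem to be available''; you should not present a large sieve as an ingredient in getting $3/7$, since $3/7$ is already what the elementary Cauchy--Schwarz step yields.

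There are also two concrete gaps in your handling of the prime-square term. The cubic argument rests on the identity $\chi(p^2)=\overline{\chi}(p)$ (valid since $\chi^3=\chi_0$ forces $\chi^2=\overline{\chi}$), so that \eqref{estimation1} and \eqref{estimation2} reduce to the \emph{same} character sum. This fails for quartic characters: $\chi(p^2)=\chi^2(p)$, and since $\chi^2\ne\chi_0$ this is a nontrivial \emph{quadratic} character. You notice this, but then say the term is handled ``as in the cubic case'' by ``orthogonality of $\chi^2$ on average over $q$'', which is not what happens; the correct replacement is to observe that in the parameterization, $\chi^2(p)=(p/n)_4^2=(p/n)_2$ is the quadratic residue symbol in $\mathbb{Z}[i]$, to which the same Landau/P\'olya--Vinogradov estimate applies (with even more room, since the $\hat f(2\log p/\log X)$ restricts to $p\ll X^{3/14}$, not $X^{1/2}$ as you wrote, and the weight is $\log p/p$). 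Finally, you omit the step of removing the condition that $n$ have no rational prime divisor; in the cubic proof this is done by a M\"obius sum over rational $d$ together with the fact that $(p/d)_3=1$ for coprime rational integers $d,p$ (corollary to Proposition~9.3.4 of \cite{I&R}). The quartic analogue $(p/d)_4=1$ for coprime odd rational integers also holds (check it for inert and split primes separately), and must be invoked to make the M\"obius inversion harmless; without this observation the extra factor $(p/d)_4$ would have to be carried through the estimate.
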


The proofs of Theorems \ref{mainthm}, \ref{mainthmonK} and
\ref{quarticthm} are similar.  Therefore, we describe briefly here
only the proof of Theorem~\ref{mainthm}.  Similar to the proof of
Theorem~\ref{quadraticmainthm}, we first convert the sum over
zeros of the $L$-functions under consideration into sums over
primes using the relevant versions of the explicit formula.  This
leads us to consider sums involving the cubic or quartic symbols
over primes and prime squares.  These are then rewritten as sums
involving Hecke characters with new summation conditions.  The
summation conditions are handled using M\"obius inversion and the
final estimates are obtained using a P\'olya-Vinogradov type bound
for these Hecke characters, Lemma~\ref{PVbounds}.\newline

\subsection{Notations} The following notations and conventions are used throughout the paper.\\
\noindent $e(z) = \exp (2 \pi i z) = e^{2 \pi i z}$. \newline $f =
O(g)$ or $f \ll g$ means $|f| \leq cg$ for some unspecified
positive constant $c$. \newline
For $x \in \rear$, $\| x \| =
\min_{n \in \intz} |x-n|$ denotes the distance between $x$ and the
closest integer.

\section{Preliminaries}
\label{sec 2}
\subsection{The Explicit Formula}

Our approach in this paper relies on the following explicit
formula, which essentially converts the sum over zeros of an
$L$-function to the sum over primes.
\begin{lemma}
\label{lem2.2.1}
   Let $f(x)$ be an even Schwartz function whose Fourier transform
   $\hat{f}(u)$ is compactly supported.  Then for any primitive Dirichlet character
   $\chi$, we have
\begin{equation} \label{01.3}
\begin{split}
S(\chi, f) =\int\limits^{\infty}_{-\infty}& f(t) \dif t-\frac
1{\log X}\sum_{p}\frac {\log p}{\sqrt{p}}\hat{f}\left( \frac {\log
p}{\log X}\right) \left(\chi(p) + \overline{\chi}(p) \right ) \\
&-\frac 1{\log X}\sum_{p}\frac {\log p}{p}\hat{f} \left( \frac {2\log
p}{\log X}\right) \left(\chi(p^2) + \overline{\chi}(p^2) \right )
+O\left(\frac{1}{\log X}\right).
\end{split}
\end{equation}
\end{lemma}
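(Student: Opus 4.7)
The plan is to reduce \eqref{01.3} to the classical Riemann--Weil explicit formula applied to the rescaled test function
\begin{equation*}
h(t) = f\!\left(\frac{t\log X}{2\pi}\right),
\end{equation*}
for which $\sum_j h(\gamma_{\chi,j}) = S(\chi,f)$. Since $\chi$ is primitive of conductor $q$ of size $X$, $L(s,\chi)$ is entire of order $1$; taking the logarithmic derivative of its Hadamard product and pairing against $h$ (equivalently, contour-shifting $-L'/L(s,\chi)$ from $\Re s = 2$ to $\Re s = -1$ and invoking the functional equation) should yield an identity of the shape
\begin{equation*}
S(\chi,f) = \hat{f}(0)\,\frac{\log q}{\log X} + A(\chi,f,X) - \frac{2}{\log X}\sum_{n=1}^{\infty}\frac{\Lambda(n)}{\sqrt{n}}\,\mathrm{Re}[\chi(n)]\,\hat{f}\!\left(\frac{\log n}{\log X}\right),
\end{equation*}
where $A(\chi,f,X)$ is an archimedean piece built from $\Gamma'/\Gamma$ on the critical line.

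Next I would dispose of the non-prime pieces. Since $q \asymp X$ gives $\log q/\log X = 1 + O(1/\log X)$ and $\hat{f}(0) = \int_{\rear}f(t)\,dt$, the first term contributes $\int_{\rear}f(t)\,dt + O(1/\log X)$, producing the stated main term. After the change of variables $u = t\log X/(2\pi)$, the archimedean integral takes the form
\begin{equation*}
A(\chi,f,X) = \frac{1}{\log X}\int_{\rear}f(u)\,\Psi_{\mathfrak{a}}\!\left(\tfrac{1}{4}+\tfrac{\mathfrak{a}}{2}+\tfrac{i\pi u}{\log X}\right)du,
\end{equation*}
with $\Psi_{\mathfrak{a}}$ a combination of $\Gamma'/\Gamma$ values and $\mathfrak{a}\in\{0,1\}$ encoding the parity of $\chi$. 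Stirling's formula bounds the integrand uniformly on any fixed compact set containing $1/4$, and the Schwartz decay of $f$ controls the tails, so $A(\chi,f,X) \ll 1/\log X$, which will be absorbed into the error.

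Finally, I would write $\mathrm{Re}[\chi(n)] = \tfrac12(\chi(n)+\overline{\chi}(n))$ and split the prime-power sum according to $n = p^k$. The $k = 1$ and $k = 2$ contributions are exactly the two named sums in \eqref{01.3}, while the tail $k \geq 3$ is estimated, using $|\chi(p^k)|\leq 1$ and the compact support of $\hat{f}$ (which truncates $p^k \leq X^{\sigma}$ for some fixed $\sigma$), by
\begin{equation*}
\frac{1}{\log X}\sum_{p}\sum_{k\geq 3}\frac{\log p}{p^{k/2}} \ll \frac{1}{\log X}\sum_{p}\frac{\log p}{p^{3/2}(1-p^{-1/2})} \ll \frac{1}{\log X}.
\end{equation*}
The only mildly delicate point is the archimedean analysis, in particular ensuring that the bound on $A(\chi,f,X)$ is uniform in $\chi$ (i.e.\ in the parity $\mathfrak{a}$) and in $q$ across a dyadic range; the prime-power side is entirely routine given that $\hat{f}$ is compactly supported.
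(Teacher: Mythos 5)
Your proposal is correct and follows essentially the same route as the paper. The paper's proof simply cites the Rudnick--Sarnak explicit formula (their (2.16)), then observes (a) that the $\Gamma'/\Gamma$-terms contribute $O(1/\log X)$ by Stirling together with the rapid decay of $f$, and (b) that the prime-power terms with $k\geq 3$ contribute $O(1)$ in absolute value since $\sum_p \sum_{k\geq 3}\log p\,p^{-k/2}$ converges, which becomes $O(1/\log X)$ after the overall $1/\log X$ normalization. Your write-up re-derives the explicit formula from the Hadamard product/contour-shift rather than quoting it, and makes explicit the use of $q\asymp X$ to turn $\hat f(0)\log q/\log X$ into $\int f+O(1/\log X)$ (an implicit convention in the paper since conductors are taken of size $X$); otherwise the archimedean analysis via Stirling and the $k\geq 3$ tail bound are the same computations. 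One cosmetic remark: the compact support of $\hat f$ is not actually needed for the $k\geq 3$ bound, since that sum converges absolutely without truncation; it is needed elsewhere to justify absolute convergence of the $n$-sum and to make the $k=1,2$ sums finite, which is why both you and the paper invoke it.
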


\begin{proof}
We combine \cite[(2.16)]{R&S}, the fact that
     $f(x)$ is rapidly decreasing and the Stirling formula which gives that $\Gamma'/\Gamma(s)=\log s + O
    (1/|s|)$, uniformly for $|\arg s| \leq \pi - \delta, |s| \geq
    1$ to replace the $\Gamma'/\Gamma$-terms in  \cite[(2.16)]{R&S} by $O(1/\log
    X)$.  Moreover, the terms $n=p^k$ for $k \geq 3$, $p$ prime in the sum on the right-hand side of
    \cite[(2.16)]{R&S} contribute
\begin{equation*}
   \ll \sum_{p^k, k \geq 3}\frac
{\log p }{p^{k/2}} \ll 1.
\end{equation*}
The lemma follows from these observations.
\end{proof}

    Similarly, we have \cite[Lemma 4.1]{Gu}:
\begin{lemma}
\label{lem2.4}
   Let $f(x)$ be an even Schwartz function whose Fourier transform
   $\hat{f}(u)$ has compact support.  Then for any square-free $c \equiv 1 \hspace{0.1in} \shortmod{9}$ of
   $\mathbb{Z}[\omega]$, we have
\begin{equation*}
\begin{split}
S(\chi_c, f) =\int\limits^{\infty}_{-\infty} & f(t) \dif t-\frac
1{\log X}\sum_{\mathfrak{p}}\frac {\log
N(\mathfrak{p})}{\sqrt{N(\mathfrak{p})}}\hat{f}\left( \frac {\log
N(\mathfrak{p})}{\log X} \right) \left(\chi_c(\mathfrak{p}) + \overline{\chi}_c(\mathfrak{p}) \right ) \\
&-\frac 1{\log X}\sum_{\mathfrak{p}}\frac {\log
N(\mathfrak{p})}{N(\mathfrak{p})}\hat{f} \left( \frac {2\log
N(\mathfrak{p})}{\log X}\right) \left(\chi_c(\mathfrak{p}^2) +
\overline{\chi}_c(\mathfrak{p}^2) \right ) +O\left(\frac{1}{\log
X}\right),
\end{split}
\end{equation*}
   where $\mathfrak{p}$ runs over all non-zero integral ideals in
   $\ratq (\omega)$.
\end{lemma}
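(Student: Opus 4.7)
The plan is to follow exactly the same strategy as the proof of Lemma~\ref{lem2.2.1}, replacing the rational Weil-type explicit formula from \cite[(2.16)]{R&S} with its analogue over $K = \ratq(\omega)$, which is precisely the content of \cite[Lemma 4.1]{Gu}. So first I would quote Güloğlu's explicit formula, which expresses $S(\chi_c, f)$ as an integral $\int_{-\infty}^{\infty} f(t)\, \dif t$, plus contributions from the archimedean $\Gamma$-factors of the completed Hecke $L$-function attached to $\chi_c$, plus a sum of the shape
\[
-\frac{1}{\log X}\sum_{\mathfrak{p}}\sum_{k \geq 1}\frac{\log N(\mathfrak{p})}{N(\mathfrak{p})^{k/2}}\hat{f}\!\left(\frac{k\log N(\mathfrak{p})}{\log X}\right)\bigl(\chi_c(\mathfrak{p}^k)+\overline{\chi}_c(\mathfrak{p}^k)\bigr),
\]
where $\mathfrak{p}$ runs over nonzero prime ideals of $\intz[\omega]$.

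Next I would dispose of the $\Gamma$-factor contribution. Since $K$ has one complex place, the archimedean factor in the completed $L$-function is a product of $\Gamma_{\comc}(s) = (2\pi)^{-s}\Gamma(s)$, and the $\Gamma'/\Gamma$-terms appearing in Güloğlu's formula lie in a region where the Stirling bound $\Gamma'/\Gamma(s) = \log s + O(1/|s|)$ applies uniformly for $|\arg s|\leq \pi-\delta$, $|s|\geq 1$. Combining this with the rapid decay of $f$ (so that one can truncate and absorb the $\log s$ contribution into the constant term coming from the integral), the whole $\Gamma$-factor contribution is $O(1/\log X)$, mirroring the reduction performed in Lemma~\ref{lem2.2.1}.

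Then I would show that the prime-power terms $k \geq 3$ contribute only $O(1/\log X)$. Using the trivial bound $|\chi_c(\mathfrak{p}^k)|\leq 1$ and $\|\hat{f}\|_{\infty} < \infty$, this tail is bounded by
\[
\frac{1}{\log X}\sum_{\mathfrak{p}}\sum_{k\geq 3}\frac{\log N(\mathfrak{p})}{N(\mathfrak{p})^{k/2}} \ll \frac{1}{\log X}\sum_{\mathfrak{p}}\frac{\log N(\mathfrak{p})}{N(\mathfrak{p})^{3/2}} \ll \frac{1}{\log X},
\]
the inner sum converging absolutely by the prime ideal theorem for $K$ (or simply by comparison with the corresponding sum over rational primes, using that each rational prime splits into at most $[K:\ratq]=2$ prime ideals of $\intz[\omega]$).

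After these two reductions, only the $k=1$ and $k=2$ terms remain, and one obtains the formula stated in the lemma. The only place where any real work occurs is the Stirling reduction of the $\Gamma$-factors; everything else is a routine tail estimate. Since this step is carried out in essentially the same way as in Lemma~\ref{lem2.2.1} and is by now standard for Hecke $L$-functions over imaginary quadratic fields, I do not anticipate any substantial obstacle.
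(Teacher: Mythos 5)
The paper does not actually prove Lemma~\ref{lem2.4}; it states it as a verbatim quotation of \cite[Lemma 4.1]{Gu} (the phrase ``Similarly, we have \cite[Lemma 4.1]{Gu}:'' precedes the lemma and no proof follows). Your proposal instead reconstructs a derivation from scratch: start from a raw Hecke explicit formula over $K=\ratq(\omega)$, remove the archimedean $\Gamma$-terms via the Stirling bound, and bound the $k\geq 3$ prime-power tail by $\sum_{\mathfrak p}\sum_{k\geq 3}\log N(\mathfrak p)\, N(\mathfrak p)^{-k/2}\ll \sum_{\mathfrak p}\log N(\mathfrak p)\, N(\mathfrak p)^{-3/2}\ll 1$, exactly mirroring how the paper proves Lemma~\ref{lem2.2.1}. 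The mathematics in both reductions is sound: the single $\Gamma_{\comc}$ factor is handled by $\Gamma'/\Gamma(s)=\log s+O(1/|s|)$ together with the rapid decay of $f$, and the tail converges because at most $[K:\ratq]=2$ primes of $K$ lie above each rational prime. What the paper's route buys is brevity; what yours buys is an explicit account of where the error term comes from. The one point to fix is bookkeeping rather than mathematics: you describe \cite[Lemma 4.1]{Gu} as ``precisely the content of'' the raw $K$-analogue of \cite[(2.16)]{R&S}, i.e.\ a formula still carrying $\Gamma$-factor terms and all prime powers, which you then clean up. But the paper's citation pattern indicates that \cite[Lemma 4.1]{Gu} already \emph{is} the cleaned-up statement identical to Lemma~\ref{lem2.4}. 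If that is so, your Stirling and tail steps are reproving Güloğlu's lemma rather than deducing Lemma~\ref{lem2.4} from it. Either start from a genuinely raw explicit formula for Hecke $L$-functions over $K$ (a standard consequence of the functional equation, not \cite[Lemma 4.1]{Gu}), or accept the lemma as a direct quotation and stop; the two should not be conflated.
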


   For $\Re(s)>1$, we can rewrite the Euler product \eqref{euler} of
   $L(f \times \chi_{8d}, s)$ as
\begin{align*}
L(f \times \chi_{8d},s) = \prod_{p}
\left(1-\frac{\alpha_{f}(p)\chi_{8d}(p)}{p^{s}}\right)^{-1}\left(1-\frac{\alpha^{-1}_{f}(p)\chi_{8d}(p)}{p^{s}}\right)^{-1},
\end{align*}
    with $\alpha_{f}(p)+\alpha^{-1}_{f}(p)=a_f(p)$. Similar to
    Lemma \ref{lem2.2.1}, we have the following
\begin{lemma}
\label{lem2.1}
   Let $\phi(x)$ be an even Schwartz function whose Fourier transform
   $\hat{\phi}(u)$ is compactly supported.  Then for $d \in D(X)$,
   we have
\begin{align}
\label{2.5}
   D(d, f, \phi)   =\int\limits^{\infty}_{-\infty}\phi(t) \dif t+\frac 1{2}
   \int\limits^{\infty}_{-\infty}\hat{\phi}(u) \dif u-S(f, d,X; \hat{\phi})+O \left( \frac {\log \log 3X}{\log
   X} \right),
\end{align}
    with the implicit constant depending on $\phi$ and
\begin{equation} \label{realprimeSdef}
   S(f, d,X;\hat{\phi})=\frac 1{\log X}\sum_p\frac {a_f(p)\log
   p}{\sqrt{p}}\left( \frac{8d}{p} \right) \hat{\phi}\left( \frac {\log p}{2\log X} \right).
\end{equation}
\end{lemma}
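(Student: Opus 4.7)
\medskip

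The plan is to mimic the derivation of Lemma~\ref{lem2.2.1}, but now applied to the degree-two $L$-function $L(\f\times\chi_{8d},s)$ whose completed form, by the functional equation displayed between \eqref{euler} and \eqref{2.5}, has log-conductor of size $2\log q \sim 2\log X$, which is precisely the reason the normalized zeros $\tilde{\gamma}_{f,8d,j}$ carry the extra factor of $2$ compared to the Dirichlet case. I start from the standard explicit formula for $\sum_j \phi(\tilde{\gamma}_{f,8d,j})$ (the analogue of \cite[(2.16)]{R&S}), which writes $D(d,f,\phi)$ as a log-conductor term $\frac{\log(q/2\pi)}{\log X}\hat{\phi}(0)$, a $\Gamma'/\Gamma$ term coming from the archimedean factor $\Gamma(k/2+s)$, and a prime-power sum
\[
-\frac{1}{\log X}\sum_{p}\sum_{k\ge 1}\frac{(\alpha_f(p)^k+\alpha_f(p)^{-k})\chi_{8d}(p)^k\log p}{p^{k/2}}\,\hat{\phi}\!\left(\frac{k\log p}{2\log X}\right).
\]
Stirling's formula $\Gamma'/\Gamma(s)=\log s+O(1/|s|)$ replaces the archimedean contribution by $O(1/\log X)$, and combined with the log-conductor term (using $q=8d$ with $X\le d\le 2X$) it yields $\hat{\phi}(0)=\int_{-\infty}^{\infty}\phi(t)\,\dif t$ up to $O(1/\log X)$.

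The $k=1$ contribution in the prime-power sum is, using $\alpha_f(p)+\alpha_f(p)^{-1}=a_f(p)$, precisely $-S(f,d,X;\hat{\phi})$ as defined in \eqref{realprimeSdef}. The terms with $k\ge 3$ can be dismissed by the trivial bound $|\alpha_f(p)^k+\alpha_f(p)^{-k}|\le 2$ (from the Ramanujan--Petersson bound $|a_f(p)|\le 2$) and the convergence of $\sum_{p,\,k\ge 3}\log p/p^{k/2}$, giving an $O(1/\log X)$ error. All that remains is the $k=2$ prime-square sum, which is the main obstacle and the only place where something genuinely new is needed beyond the Dirichlet case.

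The $k=2$ term, after using $\chi_{8d}(p)^2=1$ for $p\nmid 8d$ and $\alpha_f(p)^2+\alpha_f(p)^{-2}=a_f(p)^2-2$, is
\[
-\frac{1}{\log X}\sum_{p\,\nmid\, 8d}\frac{(a_f(p)^2-2)\log p}{p}\,\hat{\phi}\!\left(\frac{\log p}{\log X}\right).
\]
To evaluate it, I invoke the Rankin--Selberg factorization $L(f\times f,s)=\zeta(s)L(\sym f,s)$ together with the analyticity and non-vanishing of $L(\sym f,s)$ at $s=1$, which via the prime number theorem for $L(f\times f,s)$ yields
\[
\sum_{p\le Y}\frac{a_f(p)^2\log p}{p}=\log Y+O(1),\qquad \sum_{p\le Y}\frac{\log p}{p}=\log Y+O(1),
\]
so that $\sum_{p\le Y}(a_f(p)^2-2)\log p/p=-\log Y+O(1)$. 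A standard partial summation against $\hat{\phi}(\log p/\log X)$, using $\int_0^\infty u\,\hat{\phi}'(u)\,\dif u=-\int_0^\infty \hat{\phi}(u)\,\dif u$ and the evenness of $\hat\phi$, produces the main term $\tfrac{1}{2}\int_{-\infty}^{\infty}\hat{\phi}(u)\,\dif u$ with an error of $O(1/\log X)$. Finally, the restriction $p\nmid 8d$ differs from the unrestricted sum by $\frac{1}{\log X}\sum_{p\mid 8d}\log p/p$, which is $O(\log\log 3X/\log X)$ since $\omega(8d)\ll \log X/\log\log X$ forces $\sum_{p\mid 8d}\log p/p\ll \log\log 3X$. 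This last correction produces the stated $O(\log\log 3X/\log X)$ error; combining all pieces proves the lemma.
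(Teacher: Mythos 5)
Your proof is correct and follows essentially the same route as the paper: the explicit formula of Rudnick--Sarnak, dismissing $k\ge 3$ via $|\alpha_f(p)|=1$, identifying $k=1$ with $-S(f,d,X;\hat{\phi})$, bounding the $p\mid 8d$ correction by $O(\log\log 3X/\log X)$, and evaluating the $k=2$ sum through $\alpha_f(p)^2+\alpha_f(p)^{-2}=a_f(p)^2-2$ together with a Rankin--Selberg input and partial summation. The only difference is cosmetic: you derive $\sum_{p\le Y}a_f(p)^2\log p/p=\log Y+O(1)$ directly from $L(\mathrm{sym}^2 f,s)$, while the paper quotes the $\log^2 p/p$-weighted version from \cite[Prop.~2.3]{R&S} and then uses partial summation; both rest on the same underlying non-vanishing of $L(\mathrm{sym}^2 f,1)$.
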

\begin{proof}
    We use \cite[(2.16)]{R&S} again here and identify $c_{\pi}(n)$ in \cite[(2.16)]{R&S} as $\Lambda(n)\chi_{8d}(n)a_f(n)$
    with $a_f(p^k)=\alpha^k_f(p)+\alpha^{-k}_f(p)$ in our case.
    By Deligne's proof \cite{D} of the Weil conjecture, we know that
   $|\alpha_f(p)|=1$ so that the terms $n=p^k$ for $k \geq 3$, $p$ prime in the sum on the right-hand side of
    \cite[(2.16)]{R&S} contribute $O(1)$. Moreover, note that
\begin{equation*}
  \sum_{p | 8d}\frac {\log p}{p} \ll \log \log 3X.
\end{equation*}

    Thus the terms $n=p^2$ in the sum on the right-hand side of
    \cite[(2.16)]{R&S} contribute
\begin{align*}
 \frac 1{\log X}\sum_{p}\frac {a_f(p^2)\log p}{p}
   \hat{\phi}\left( \frac {\log p}{\log X} \right)+ O \left( \frac {\log \log 3X}{\log
   X} \right).
\end{align*}
Recall that
\begin{equation} \label{apsquare}
a_f(p^2)=a^2_f(p)-2
\end{equation}
and it follows from Proposition 2.3 in \cite{R&S} that
\begin{equation}
\label{2.90}
    \sum_{p \leq x}\frac {a^2_f(p) \log^2 p }{p}= \frac {\log^2 x}{2}+O(\log
    x).
\end{equation}
Note also that we have Mertens' formula \cite[p. 57]{Da},
\begin{equation}
\label{mer}
    \sum_{p \leq x}\frac {\log p}{p}= \log x+O(1).
\end{equation}
Combining \eqref{apsquare}, \eqref{2.90} and \eqref{mer}, we deduce that
\begin{equation*}
    \sum_{p \leq x}\frac {a_f(p^2)\log^2 p}{p}= -\frac {\log^2 x}{2}+O(\log x).
\end{equation*}
From the above and partial summation, we get that
\begin{eqnarray*}
\frac 1{\log X} \sum_{p}\frac {a_f(p^2) \log p}{p}
   \hat{\phi}\left( \frac {\log p}{\log X} \right) & = & -\frac 1{\log X}\int\limits^{\infty}_{1}\hat{\phi}\left( \frac {\log t}{\log X}\right) \frac {\dif t}{t}+O\left( \frac {\log \log X}{\log X} \right) \\
  &=& -\frac 1{2}
\int\limits^{\infty}_{-\infty}\hat{\phi}(t)\dif t+O \left( \frac {\log \log X}{\log
    X} \right).
\end{eqnarray*}
  The assertion of the lemma follows from this easily.
\end{proof}
\subsection{Poisson Summation}
   We now fix $f$ and $X$ and henceforth write $\Phi(t)$ in place
   of $\Phi_X(t)$.  We shall focus on finding the asymptotic expression of (with $n \geq 1$)
\begin{equation*}
    S(X,Y; \hat{\phi}, f, \Phi) :=
    \sum_{\gcd(d,2)=1}\mu^2(d)\sum_{p \leq Y}\frac {a_f(p)\log p}{\sqrt {p}} \left( \frac{8d}{p} \right) \hat{\phi} \left( \frac {\log p}{2\log X} \right) \Phi \left( \frac {d}{X} \right),
\end{equation*}
    where $\hat{\phi}(u)$ is smooth and has its support contained in the interval $(-1+\epsilon, 1-\epsilon)$ for any $\epsilon>0$. To emphasize this condition, here
and throughout, we shall set $Y=X^{2-2\epsilon}$ and write the
condition $p \leq Y$ explicitly. \newline

Let $Z >1$ be a real parameter to be chosen later and write
     $\mu^2(d)=M_Z(d)+R_Z(d)$ where
\begin{equation*}
    M_Z(d)=\sum_{\substack {l^2|d \\ l \leq Z}}\mu(l) \; \mbox{and} \;  R_Z(d)=\sum_{\substack {l^2|d \\ l >
    Z}}\mu(l).
\end{equation*}

    Define
\[ S_M(X,Y; \hat{\phi},f, \Phi) = \sum_{\gcd(d,2)=1}M_Z(d)\sum_{p \leq Y}
    \frac {a_f(p)\log p}{\sqrt {p}} \left( \frac{8d}{p} \right) \hat{\phi} \left( \frac {\log p}{2\log X} \right) \Phi\left( \frac{d}{X} \right) \]
    and
\[ S_R(X,Y; \hat{\phi}, f, \Phi)
=\sum_{\gcd(d,2)=1}R_Z(d)\sum_{p \leq Y}
    \frac {a_f(p)\log p}{\sqrt {p}}\left( \frac{8d}{p} \right) \hat{\phi} \left( \frac {\log p}{2\log X}\right) \Phi\left( \frac {d}{X} \right), \]
so that $S(X,Y; \hat{\phi},f, \Phi)=S_M(X,Y; \hat{\phi},
f,\Phi)+S_R(X,Y; \hat{\phi},f, \Phi)$. \newline

    Using standard techniques (see \eqref{error1}), we can show that by choosing $Z$ appropriately that $S_R(X,Y;
    \hat{\phi},f,
    \Phi)$ is small.
    Hence the main term arises only from
    $S_M(X,Y; \hat{\phi},f, \Phi)$. We write
it as
\begin{align}
\label{3.0}
    S_{M}(X,Y; \hat{\phi}, f, \Phi)  =  \sum_{p \leq Y}
    \frac {a_f(p)\log p}{\sqrt {p}} \left( \frac{8}{p} \right) \hat{\phi}\left( \frac {\log p}{2\log X} \right)
    \left ( \sum_{\gcd(d,2)=1}M_Z(d) \left( \frac{d}{p} \right)
    \Phi \left( \frac {d}{X} \right) \right).
\end{align}

  We now evaluate the inner sum above following a method of K. Soundararajan in \cite{sound1} by applying the Poisson summation
    formula to the sum over $d$. For all odd
    integers $k$ and all integers $m$, we introduce the Gauss-type
    sums
\begin{equation*}
\label{010}
    \tau_m(k) := \sum_{a \shortmod{k}}\left( \frac {a}{k} \right) e \left( \frac {am}{k} \right) =:
    \left( \frac {1+i}{2}+\left( \frac {-1}{k} \right)\frac {1-i}{2}\right) G_m(k).
\end{equation*}
We quote Lemma 2.3 of \cite{sound1} which determines $G_m(k)$.
\begin{lemma}
\label{lem1}
   If $(k_1,k_2)=1$ then $G_m(k_1k_2)=G_m(k_1)G_m(k_2)$. Suppose that $p^a$ is
   the largest power of $p$ dividing $m$ (put $a=\infty$ if $m=0$).
   Then for $b \geq 1$ we have
\begin{equation*}
\label{011}
    G_m(p^b)= \left\{\begin{array}{cl}
    0  & \mbox{if $b\leq a$ is odd}, \\
    \phi(p^b) & \mbox{if $b\leq a$ is even},  \\
    -p^a  & \mbox{if $b=a+1$ is even}, \\
    (\frac {m/p^a}{p})p^a\sqrt{p}  & \mbox{if $b=a+1$ is odd}, \\
    0  & \mbox{if $b \geq a+2$}.
    \end{array}\right.
\end{equation*}
\end{lemma}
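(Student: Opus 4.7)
The plan is to prove the two assertions in turn, relating everything back to the ``raw'' Gauss sum $\tau_m(k)$ defined in the text and then converting via the factor $\epsilon(k) := \tfrac{1+i}{2} + \leg{-1}{k}\tfrac{1-i}{2}$.

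For Part 1 (multiplicativity), I would invoke the Chinese Remainder Theorem to write every $a \bmod k_1 k_2$ as $a \equiv b_1 k_2 \bar{k}_2 + b_2 k_1 \bar{k}_1 \pmod{k_1 k_2}$ with $b_i \bmod k_i$ and $k_i \bar{k}_i \equiv 1 \pmod{k_j}$. Both factors in the summand decouple: $\leg{a}{k_1 k_2} = \leg{b_1}{k_1}\leg{b_2}{k_2}$ and $e(am/k_1 k_2) = e(m b_1 \bar{k}_2/k_1)\, e(m b_2 \bar{k}_1/k_2)$. After changing variables $b_i \mapsto b_i k_j$ within each factor, one extracts $\leg{k_j}{k_i}$ and the sum factors as
\[
\tau_m(k_1 k_2) = \leg{k_1}{k_2}\leg{k_2}{k_1}\, \tau_m(k_1)\,\tau_m(k_2).
\]
A short computation using $\leg{-1}{k} = (-1)^{(k-1)/2}$ then gives $\epsilon(k_1)\epsilon(k_2)/\epsilon(k_1 k_2) = (-1)^{(k_1-1)/2 \cdot (k_2-1)/2}$, which by quadratic reciprocity for Jacobi symbols matches the sign factor above; the two cancel to yield $G_m(k_1 k_2) = G_m(k_1) G_m(k_2)$.

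For Part 2, I set $\alpha := v_p(m)$ and argue by cases on $b$ versus $\alpha$. When $b \le \alpha$, the exponential $e(nm/p^b)$ is trivial, so the sum reduces to $\sum_{n \bmod p^b} \leg{n}{p^b}$, which is $\phi(p^b)$ for $b$ even and $0$ for $b$ odd by character orthogonality; combining with $\epsilon(p^b) = 1$ for $b$ even recovers the stated values. When $b = \alpha + 1$, I write $m = p^\alpha m'$ with $(m',p) = 1$ so that $nm/p^b = nm'/p$, and group $n$ by residue mod $p$ to get $p^{b-1}$ lifts per class; the sum becomes $p^{b-1}\sum_{r \bmod p}\leg{r}{p}^b e(rm'/p)$. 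For $b$ even this equals $-p^{b-1}$; for $b$ odd it equals $p^{b-1}\leg{m'}{p}$ times the classical quadratic Gauss sum $g(p)$, whose value $\sqrt{p}$ or $i\sqrt{p}$ (depending on $p \bmod 4$) will be absorbed by $\epsilon(p^b)^{-1}$ to produce $\leg{m'}{p} p^\alpha \sqrt{p}$.

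The case $b \ge \alpha + 2$ is the heart of the matter. Here I would exploit the invariance of $\tau_m(p^b)$ under the substitution $n \mapsto nu$ with $u = 1 + p^{b-\alpha-1}v$, which is a unit mod $p^b$ satisfying $\leg{u}{p^b} = 1$ and which multiplies $e(nm/p^b)$ by the extra factor $e(nm'v/p)$. Averaging over $v \bmod p^{\alpha+1}$ collapses this extra factor to $p^{\alpha+1}\mathbf{1}[p \mid n]$, forcing $\tau_m(p^b) = \sum_{p \mid n} \leg{n}{p^b} e(nm/p^b) = 0$ since $\leg{n}{p^b}$ vanishes whenever $p \mid n$. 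The main obstacle across the whole proof is the careful sign bookkeeping that ties the $\tau_m$-side quadratic reciprocity factor to the $\epsilon$-quotient in Part 1, and, in the last case of Part 2, identifying the precise modulus $p^{b-\alpha-1}$ for the twist $u$ so that simultaneously the Jacobi symbol is preserved and the residual phase averages to zero.
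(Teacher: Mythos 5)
The paper does not actually prove this lemma: it is quoted verbatim as Lemma~2.3 of Soundararajan's paper \cite{sound1}, so there is no in-text proof to compare against. Your argument is a correct, self-contained proof and is essentially the standard one: CRT factorization of $\tau_m(k_1k_2)$ producing $\leg{k_1}{k_2}\leg{k_2}{k_1}\tau_m(k_1)\tau_m(k_2)$, with the Jacobi reciprocity sign cancelling the quotient $\epsilon(k_1)\epsilon(k_2)/\epsilon(k_1k_2)$ (both equal $(-1)^{\frac{k_1-1}{2}\cdot\frac{k_2-1}{2}}$), and then the case split on $b$ versus $a=v_p(m)$ for prime powers, with the $b\ge a+2$ case killed by the unit twist $n\mapsto n(1+p^{b-a-1}v)$. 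Every step checks out; one minor stylistic remark is that in the last case it suffices to average the phase $e(nvm'/p)$ over $v \bmod p$ rather than over $v\bmod p^{a+1}$, since that phase depends on $v$ only through its residue mod $p$.
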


   For a Schwartz function $F$, we define
\begin{equation} \label{tildedef}
   \tilde{F}(\xi)=\frac {1+i}{2}\hat{F}(\xi)+\frac
   {1-i}{2}\hat{F}(-\xi)=\int\limits^{\infty}_{-\infty}\left(\cos(2\pi \xi
   x)+\sin(2\pi \xi x) \right)F(x) \dif x.
\end{equation}
    We quote Lemma 2.6 of \cite{sound1} which determines the
   inner sum in \eqref{3.0}.
\begin{lemma}
\label{lem2}
   Let $\Phi$ be a non-negative, smooth function supported in
   $(1,2)$. For any odd integer $k$,
\begin{equation*}
\label{013}
  \sum_{\gcd(d,2)=1}M_Z(d)\left( \frac {d}{k} \right)
    \Phi\left( \frac {d}{X} \right)=\frac {X}{2k}\left( \frac {2}{k} \right) \sum_{\substack {\alpha \leq Z \\ \gcd(\alpha, 2k)=1}}\frac {\mu(\alpha)}{\alpha^2}
    \sum_m(-1)^mG_m(k)\tilde{\Phi}\left( \frac {mX}{2\alpha^2k} \right),
\end{equation*}
where $\tilde{\Phi}$ is as defined in \eqref{tildedef}.
\end{lemma}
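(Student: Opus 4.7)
The plan is to expand the definition of $M_{Z}(d)$, reduce the left-hand side to an inner sum over odd integers twisted by a Jacobi symbol, apply Poisson summation to this inner sum, and finally repackage the resulting Fourier sum into the stated form involving $G_{m}(k)$ and $\tilde{\Phi}$.

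First I would substitute $M_{Z}(d)=\sum_{l^{2}\mid d,\ l\le Z}\mu(l)$ and write $d=\alpha^{2}e$ with $\alpha=l$. The condition $\gcd(d,2)=1$ splits into $\gcd(\alpha,2)=\gcd(e,2)=1$, while the Jacobi symbol $\left(\alpha^{2}e/k\right)$ vanishes unless $\gcd(\alpha,k)=1$; together these restrict the outer sum to $\gcd(\alpha,2k)=1$ and yield
\[
\sum_{\gcd(d,2)=1}M_{Z}(d)\left(\frac{d}{k}\right)\Phi\!\left(\frac{d}{X}\right)
= \sum_{\substack{\alpha\le Z\\ \gcd(\alpha,2k)=1}}\mu(\alpha)\sum_{\gcd(e,2)=1}\left(\frac{e}{k}\right)\Phi\!\left(\frac{\alpha^{2}e}{X}\right).
\]

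Next I would apply Poisson summation to the inner sum over $e$. Its weight is the odd-indicator times $(e/k)$, which is periodic modulo $2k$, and a change of variables in the Fourier transform gives
\[
\sum_{\gcd(e,2)=1}\left(\frac{e}{k}\right)\Phi\!\left(\frac{\alpha^{2}e}{X}\right)
= \frac{X}{2k\alpha^{2}}\sum_{m\in\mathbb{Z}}T_{m}\,\hat{\Phi}\!\left(\frac{mX}{2k\alpha^{2}}\right),
\]
where $T_{m}=\sum_{\substack{a\bmod 2k\\ 2\nmid a}}\left(\frac{a}{k}\right)e\!\left(\frac{am}{2k}\right)$. To evaluate $T_{m}$ I would use that $k$ is odd to parametrize the odd residues modulo $2k$ as $a=k+2b$ with $b$ running over $\mathbb{Z}/k\mathbb{Z}$; then $(a/k)=(2b/k)=(2/k)(b/k)$ and $e(am/(2k))=(-1)^{m}e(bm/k)$, so that $T_{m}=(-1)^{m}(2/k)\tau_{m}(k)$.

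Finally I would use the defining decomposition $\tau_{m}(k)=\bigl(\tfrac{1+i}{2}+\left(\tfrac{-1}{k}\right)\tfrac{1-i}{2}\bigr)G_{m}(k)$ together with the elementary symmetry $G_{-m}(k)=(-1/k)G_{m}(k)$, which follows from the change of variable $a\mapsto -a$ in the Gauss sum. Substituting $m\mapsto -m$ in the $\tfrac{1-i}{2}$-piece folds it into $\hat{\Phi}(-\xi_{m})$ with $\xi_{m}=mX/(2k\alpha^{2})$, so that by the definition of $\tilde{\Phi}$,
\[
\sum_{m}(-1)^{m}\tau_{m}(k)\hat{\Phi}(\xi_{m})=\sum_{m}(-1)^{m}G_{m}(k)\tilde{\Phi}(\xi_{m}).
\]
Collecting the factor $X/(2k\alpha^{2})$ and the pulled-out $(2/k)$, and restoring the outer sum over $\alpha$, gives the claimed identity. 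The only mildly delicate point is choosing a representation of odd residues modulo $2k$ that cleanly separates the $(-1)^{m}$ and $(2/k)$ factors; once that is done, the remainder is routine bookkeeping with Poisson summation and the Gauss-sum symmetry.
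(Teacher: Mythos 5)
Your proof is correct. One contextual note: the paper does not prove this lemma at all --- it is Lemma~2.6 of Soundararajan's paper \cite{sound1} and is simply quoted there --- so there is no ``paper's own proof'' to compare against, but your argument faithfully reconstructs the standard one. All the key steps check out: the substitution $d=\alpha^2 e$ cleanly converts $M_Z(d)$ into the outer $\mu(\alpha)/\alpha^2$ sum once you observe that $\left(\frac{\alpha^2 e}{k}\right)$ vanishes unless $\gcd(\alpha,k)=1$, at which point $\left(\frac{\alpha}{k}\right)^2=1$; the inner sum over odd $e$ is handled by Poisson summation against the period-$2k$ weight; the parametrization $a=k+2b$ of the odd residues modulo $2k$ cleanly peels off both the $(-1)^m$ and the $\left(\frac{2}{k}\right)$ factors, leaving $T_m=(-1)^m\left(\frac{2}{k}\right)\tau_m(k)$; and the final repackaging of $\tau_m(k)\hat\Phi(\xi_m)$ as $G_m(k)\tilde\Phi(\xi_m)$ is exactly compensated by the symmetry $G_{-m}(k)=\left(\frac{-1}{k}\right)G_m(k)$, which you correctly derive from $a\mapsto -a$ in the Gauss sum together with the defining decomposition $\tau_m(k)=\bigl(\tfrac{1+i}{2}+\left(\tfrac{-1}{k}\right)\tfrac{1-i}{2}\bigr)G_m(k)$. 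No gaps.
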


    Note that for any non-negative integer $l$,
\begin{equation*}
    \tilde{\Phi}^{(l)}(\xi)\ll 1, \hspace{0.1in} |\xi| < 1.
\end{equation*}
     Also note via integration by parts,
\begin{equation*}
\label{17}
    \tilde{\Phi}(\xi)=\frac {-1}{2\pi \xi}\Bigl(\int\limits^{1+1/U}_{1}+\int\limits^{2}_{2-1/U} \Bigr )
    \Phi'(x) \Bigl(\sin(2\pi \xi
   x)-\cos(2\pi \xi x) \Bigr ) \dif x \ll \frac 1{|\xi|}.
\end{equation*}
    Similarly, one can show for any $l \geq 0, j \geq 1$,
\begin{equation*}
    \tilde{\Phi}^{(l)}(\xi) \ll \frac {U^{j-1}}{|\xi|^j}.
\end{equation*}

\subsection{Primitive cubic and quartic Dirichlet characters}
\label{sec2.4}
   The classification of all the primitive cubic characters of conductor $q$ coprime to $3$ is given in
   \cite{B&Y}. It is shown there that every such character is of the form
   $m \rightarrow (\frac{m}{n})_3$ for some $n \in \mz[\omega]$, with $n \equiv 1 \pmod{3}$, $n$ square-free and not
divisible by any rational primes, $N(n) = q$. Here the symbol
$(\frac{\cdot}{n})_3$ is the cubic residue symbol in the ring
$\mz[\omega]$.  For a prime $\pi \in\mz[\omega]$ with $N(\pi) \neq
3$, the cubic character is defined for $a \in \mz[\omega]$,
$\gcd(a, \pi)=1$ by $\leg{a}{\pi}_3 \equiv a^{\frac{N(\pi)-1}{3}}
\pmod{\pi}$, with $\leg{a}{\pi}_3 \in \{ 1, \omega, \omega^2 \}$.
When $\pi | a$, set $\leg{a}{\pi}_3 =0$. One then extends the
cubic character to composite $n$ with $\gcd (N(n), 3)=1$
multiplicatively. \newline

Similarly, one can give a classification of all the primitive complex quartic characters of conductor $q$ coprime to
    $2$.  Every such character is of the form $m \rightarrow
(\frac{m}{n})_4$ for some $n \in \mz[i]$, with $n \equiv 1
\pmod{(1+i)^3}$, $n$ square-free and not divisible by any rational
primes and $N(n) = q$. Here the symbol $(\frac{\cdot}{n})_4$ is
the quartic residue symbol in the ring $\mz[i]$.  For a prime $\pi
\in \mz[i]$ with $N(\pi) \neq 2$, the quartic character is defined
for $a \in \mz[i]$, $\gcd(a, \pi)=1$ by $\leg{a}{\pi}_4 \equiv
a^{\frac{N(\pi)-1}{4}} \pmod{\pi}$, with $\leg{a}{\pi}_4 \in \{
\pm 1, \pm i \}$. When $\pi | a$, $\leg{a}{\pi}_4$ is defined to
be zero.  Then the quartic character can be extended to composite
$n$ with $\gcd(N(n), 2)=1$ multiplicatively.  Note that in
$\intz[i]$, every ideal coprime to $2$ has a unique generator
congruent to 1 modulo $(1+i)^3$.

\section{Proof of Theorem \ref{quadraticmainthm}}

    Note that as $X \rightarrow \infty$,
\begin{align*}
    \sum_{d \in D(X)}\Phi \left( \frac {d}{X} \right) \sim \# D(X).
\end{align*}
     Moreover, as ${\hat \phi}$ is supported in $(-1,1)$, we
     have
\begin{equation*}
  \int\limits^{\infty}_{-\infty}\phi(t) \dif t+\frac 1{2}
   \int\limits^{\infty}_{-\infty}\hat{\phi}(u) \dif u=\int\limits^{\infty}_{-\infty}\phi(t)\left( 1+\frac {\sin (2 \pi t)}{2 \pi
   t} \right) \dif t.
\end{equation*}
     Theorem \ref{quadraticmainthm} thus follows from \eqref{2.5}, provided that
     we show for any Schwartz function $\phi$ with $\hat{\phi}$ supported in
$(-1, 1)$,
\begin{equation}
\label{01.50}
  \lim_{X \rightarrow \infty} \frac{S(X, Y;\hat{\phi}, f, \Phi)}{X \log X}=0.
\end{equation}

As
\[ S(X,Y; \hat{\phi},f, \Phi)=S_M(X,Y; \hat{\phi}, f, \Phi)+S_R(X,Y; \hat{\phi},f, \Phi), \]
the remainder of this section is therefore devoted to the evaluation of $S_R(X,Y; \hat{\phi},f, \Phi)$ and $S_{M}(X,Y; {\hat \phi}, f, \Phi)$.
\subsection{Estimation of $S_R(X,Y; \hat{\phi},f, \Phi)$} \label{sec 3.1}
In this section, we estimate $S_R(X,Y; \hat{\phi},f, \Phi)$. We first seek a bound for
\begin{equation*}
  E(Y;\chi, \hat{\phi}, f) :=\sum_{p \leq Y}
    \frac {a_f(p)\log p}{\sqrt {p}}\chi (p)\hat{\phi} \left( \frac {\log p}{2\log X} \right),
\end{equation*}
   for any non-principal quadratic character $\chi$ with modulus $q$ and $Y \leq
   X^{2-2\epsilon}$. For this we need the following result which follows from \cite[Theorem
   5.15]{iwakow}.
\begin{lemma}
\label{lem3.2}
Suppose that GRH is true. For any Dirichlet character $\chi$ with modulus $q$, we have for $x \geq 1$,
\begin{equation*}
  \sum_{p \leq x} a_f(p)\chi (p)\log p
\ll x^{1/2}\log^{2} (qx).
\end{equation*}
\end{lemma}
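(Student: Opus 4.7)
The plan is to deduce this lemma from the GRH-conditional explicit formula for the $L$-function $L(s, f\times\chi)$, namely \cite[Theorem 5.15]{iwakow}, combined with a routine separation of the higher prime-power contribution. The target sum is essentially the ``prime part'' of the von Mangoldt sum
\[ \psi(x, f\times\chi) := \sum_{n\leq x}\Lambda_{f\chi}(n), \]
where the coefficients arise from the logarithmic derivative $-L'/L(s, f\times\chi) = \sum_n \Lambda_{f\chi}(n) n^{-s}$. At an unramified prime $p$ one has $\Lambda_{f\chi}(p^k) = (\alpha_f(p)^k + \alpha_f(p)^{-k})\chi(p^k) \log p$, in the notation preceding Lemma \ref{lem2.1}.

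Applied to $L(s, f\times\chi)$, which is entire and of analytic conductor $\asymp q$ (with $k$ fixed and absorbed into the implicit constant), \cite[Theorem 5.15]{iwakow} yields, under GRH, the explicit formula
\[ \psi(x, f\times\chi) = -\sum_{|\gamma| \leq T}\frac{x^{1/2 + i\gamma}}{1/2 + i\gamma} + O\left(\frac{x\log^2(qxT)}{T}\right). \]
Combining this with the standard zero-counting estimate $N(T, f\times\chi) \ll T \log(qT)$, choosing $T = x^{1/2}$, and bounding the zero sum via a dyadic decomposition over $|\gamma|$ gives
\[ \psi(x, f\times\chi) \ll x^{1/2} \log^2(qx). \]

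To finish, I would pass from $\psi(x, f\times\chi)$ to the prime-only sum by subtracting the $k \geq 2$ contribution. By Deligne's bound $|\alpha_f(p)| = 1$, this is at most
\[ \sum_{k\geq 2}\sum_{p^k \leq x} \bigl|\alpha_f(p)^k + \alpha_f(p)^{-k}\bigr|\log p \ll \sum_{k \geq 2} \sum_{p \leq x^{1/k}} k \log p \ll x^{1/2}, \]
which is absorbed into the stated bound. The ramified Euler factors at primes $p \mid q$ contribute an extra $O(\log q)$ to the constant term in the explicit formula, again harmless.

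The main technical burden sits entirely inside the cited Theorem 5.15; the only thing to verify in adapting it here is that the analytic conductor of $L(s, f \times \chi)$ is indeed $\asymp q$ uniformly in $\chi$ (for fixed weight $k$ and level $1$), so that the logarithmic factor in the final bound reads $\log^2(qx)$ rather than a larger quantity. This is standard for twists of a fixed $GL(2)$ form by a Dirichlet character of conductor $q$, and the lemma follows.
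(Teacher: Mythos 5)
Your argument is correct and is essentially the content of the paper's one-line citation of \cite[Theorem 5.15]{iwakow}: apply the GRH bound for $\sum_{n\le x}\Lambda_{f\chi}(n)$ (with zero polar term since $L(s,f\times\chi)$ is entire), then strip the $k\ge 2$ prime-power contribution using Deligne's bound $|\alpha_f(p)|=1$, exactly as you do. The one small inaccuracy is that the conductor of $L(s,f\times\chi)$ is $\asymp q^2$ rather than $\asymp q$, but since $\log(q^2x)\asymp\log(qx)$ this does not affect the stated bound.
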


   It follows from the above lemma and partial summation that
\begin{equation}
\label{3.1}
  E(Y;\chi, \hat{\phi}, f) \ll \log^{3} (qX).
\end{equation}

   Now on writing $d=l^2m $, we obtain
\begin{equation} \label{error1}
\begin{split}
    S_R(X,Y; \hat{\phi}, f, \Phi) &=
     \sum_{\substack{l>Z \\ \gcd(l, 2)=1}}\mu(l)\sum_{\gcd(m,2)=1}
     \Phi \left( \frac{l^2m}{X} \right) E(Y;\chi_{8l^2m}, \hat{\phi}, f)
     \\
     &\ll \sum_{l>Z}\sum_{X/l^2 \leq m \leq
   2X/l^2} \log^{3}(X) \ll \frac {X \log^{3} X}{Z}.
\end{split}
\end{equation}

\subsection{Estimation of $S_{M}(X,Y; {\hat \phi}, f, \Phi)$}

    Applying Lemma \ref{lem2} to the inner sum of \eqref{3.0}, we see that the sum in $S_{M}(X,Y; {\hat \phi}, f, \Phi)$ corresponding to the contribution of  $m=0$ is zero, as it follows directly from the definition that $G_0(k)=\varphi(k)$ if $k$ is a square and $G_0(k)=0$ otherwise. \newline

    Now, the sums in $S_{M}(X,Y; {\hat \phi}, f, \Phi)$
corresponding to the contribution of $m \neq 0$ can be written as
$XR/2$, where
\begin{align*}
  R= \sum_{\substack {p \leq Y \\ \gcd (2, p)=1}}
   \frac {a_f(p)\log p}{p}
   \hat{\phi} \left( \frac {\log p}{2\log X} \right) \sum_{\substack { \alpha \leq Z \\ \gcd(\alpha, 2p)=1}}\frac
{\mu(\alpha)}{\alpha^2}
    \sum^{\infty}_{\substack{m=-\infty \\ m \neq 0}} \left( \frac{m}{p} \right) (-1)^m\tilde{\Phi}
   \left(  \frac{m X}{2\alpha^2p} \right).
\end{align*}

   We recast the condition $\gcd(p, 2\alpha)=1$ as
   $\chi_{4\alpha^2}(p)$ and use estimation \eqref{3.1} to deduce by partial
   summation (note that we will take $Z$ to be smaller than
   some power of $X$ so that $\log \alpha \ll \log X$) that
\begin{equation*}
\begin{split}
\sum_{\substack
{p \leq Y \\ \gcd (2\alpha, p)=1}} & \frac {a_f(p)\log p}{p}
  \hat{\phi}\left( \frac {\log p}{2\log X} \right) \tilde{\Phi}
    \left( \frac {m X}{2\alpha^2p} \right)
    \left( \frac{m}{p} \right) =\int\limits^{Y}_1\frac {1}{\sqrt{V}}\tilde{\Phi}
    \left( \frac {m X}{2\alpha^2V} \right) \dif E(V;\chi_{4\alpha^2m}, \hat{\phi}, f)
    \\
   & \ll \log^{3}(X(|m|+2))
    \left( \frac {1}{\sqrt{Y}}
   \left |\tilde{\Phi}\left (\frac {m
   X}{2\alpha^2Y} \right ) \right |  +  \int\limits^{Y}_{1}\frac
   {1}{V^{3/2}}\left |\tilde{\Phi}
    \left (\frac {m X}{2\alpha^2V} \right ) \right |  \dif V \right. \\
   & \hspace*{2in} \left. + \int\limits^{Y}_{1}\frac
   {X}{\alpha^2V^{5/2}} \left |m\tilde{\Phi}'
    \left(\frac {m X}{2\alpha^2V} \right ) \right | \dif V \right).
\end{split}
\end{equation*}

    Hence we have
\begin{equation*}
  R  \ll \sum_{\alpha \leq Z}\frac {1}{\alpha^2}(R_1+R_2+R_3),
\end{equation*}
    where
\[ R_1 = \frac {1}{\sqrt{Y}} \sum_{m \neq 0}\log^{3}(X(|m|+2))\left |\tilde{\Phi}\left (\frac {m
   X}{2\alpha^2Y}\right ) \right |, \;  R_2 = \int\limits^{Y}_{1}\frac
   {1}{V^{3/2}}\sum_{m \neq 0}\log^{3}(X(|m|+2)) \left |\tilde{\Phi}
    \left (\frac {m X}{2\alpha^2V} \right ) \right | \dif V \]
and
\[   R_3 = \int\limits^{Y}_{1}\frac
   {X}{\alpha^2V^{5/2}}\sum_{m \neq 0}\log^{3}(X(|m|+2)) \left|m\tilde{\Phi}'
    \left(\frac {m X}{2\alpha^2V} \right) \right | \dif V. \]

We now gather the estimates in Appendix C of \cite{Miller1} for $R_1, R_2$ and $R_3$ (but be aware that the sum over $p$ in \cite{Miller1} does not exist in our situation here):
\begin{equation} \label{Rest}
  R_1 + R_2 \ll \frac {U \alpha^2\sqrt{Y}\log^{7} X}{X} \; \mbox{and} \; R_3 \ll \frac {U \alpha^2\sqrt{Y}\log^{7} X}{X}+\frac {U^2 \alpha^4Y^{3/2}\log^{7} X}{X^{2008}}.
\end{equation}
Some of the estimates quoted above from \cite{Miller1} have their origins in \cite{Gao}.  The estimates from \cite{Miller1} would suffice for our purpose, but the improved estimates from \cite{Gao} for $R_1+R_2$ are neater to use here.  Combining these estimates in \eqref{Rest}, we obtain
\begin{equation*}
  R \ll \frac {UZ \sqrt{Y}\log^{7} X}{X}.
\end{equation*}

Thus we conclude that the contribution of $m \neq 0$ is bounded by
\begin{equation}
\label{error4}
    UZ \sqrt{Y}\log^{7} X.
\end{equation}

\subsection{Conclusion }  We now combine the bounds \eqref{error1},
\eqref{error4} and take $Y=X^{2-2\epsilon}, Z=\log^3 X$ (recall
that $U=\log X$) with any fixed $\epsilon>0$ to obtain
\[ S(X, Y;\hat{\phi}, f, \Phi) \ll \frac {X \log^{3} X}{Z} +UZ \sqrt{Y}\log^{7} X  =o(X\log X), \]
which implies \eqref{01.50} and this completes the proof of Theorem \ref{quadraticmainthm}.

\section{Proof of Theorem \ref{mainthm}}

Let $N_d(X)$ be the number of primitive cubic characters of
conductor $q \leq X$ with $\gcd (q, d) = 1$.  It is shown in
\cite{David} that $N_d(X) \sim c(d)X$ as $X \rightarrow \infty$
for some constant $c(d)$. It follows from this that $\# C(X) \sim
cX$ for some constant $c$ as $X \rightarrow \infty$. Combining
this with \eqref{01.3}, we see that in order to establish
Theorem~\ref{mainthm}, it suffices to show that for any Schwartz
function $f$ with $\hat{f}$ supported in $(-3/7, 3/7)$,
\begin{equation} \label{01.5}
\lim_{X \rightarrow \infty}  \frac {1}{X\log X}\sum_{p}\frac {\log p}{\sqrt{p}}\hat{f}\left( \frac {\log
p}{\log X} \right) \sum_{X \leq q \leq 2X} \ \sumstar_{\substack{\chi
\shortmod{q} \\ \chi^3 = \chi_0}} \left( \chi(p)+\overline{\chi}(p) \right) = 0
\end{equation}
and
\begin{equation} \label{primesquare}
\lim_{X \rightarrow \infty} \frac {1}{X\log X}\sum_{p}\frac
{2\log p}{p}\hat{f}\left( \frac {2\log p}{\log X}\right) \sum_{X \leq q \leq
2X} \ \sumstar_{\substack{\chi \shortmod{q} \\ \chi^3 =
\chi_0}} \left( \chi(p^2)+\overline{\chi}(p^2) \right)  = 0.
\end{equation}
   As both $\chi$ and $\bar{\chi}$ are
   primitive cubic characters, it is enough to consider the two limits for $\chi$ only.
   The term $p=3$ in each sum above is $O(X)$. Hence we may assume $p \neq 3$ in the sums above and we apply the Cauchy-Schwarz inequality to see that
\begin{equation} \label{estimation1}
\begin{split}
\sum_{p \neq 3}\frac {\log p}{\sqrt{p}}& \hat{f} \left( \frac {\log
p}{\log X} \right) \sum_{X \leq q \leq 2X} \ \sumstar_{\substack{\chi
\shortmod{q} \\ \chi^3 = \chi_0}} \chi(p) \\
&\ll \left( \sum_{p \leq
X^{1/5}}\frac {\log^2 p}{p} \right)^{1/2} \left(\sum_{3 \neq p \leq
X^{1/5}} \left|\sum_{X \leq q \leq 2X} \ \sumstar_{\substack{\chi
\shortmod{q} \\ \chi^3 = \chi_0}} \chi(p) \right|^2 \right)^{1/2},
\end{split}
\end{equation}
and
\begin{equation} \label{estimation2}
\begin{split}
\sum_{p \neq 3} & \frac {\log
p}{p}\hat{f} \left( \frac {2\log p}{\log
X} \right) \sum_{X \leq q \leq 2X} \ \sumstar_{\substack{\chi \shortmod{q} \\
\chi^3 = \chi_0}} \chi(p^2) \\
& \ll \left(\sum_{p \leq X^{1/5}}\frac
{\log^2 p}{p^2} \right)^{1/2} \left( \sum_{3 \neq p \leq X^{1/5}}
\left| \sum_{X \leq q \leq 2X} \ \sumstar_{\substack{\chi \shortmod{q} \\
\chi^3 = \chi_0}} \chi(p^2) \right|^2 \right)^{1/2}.
\end{split}
\end{equation}
   It's easy to see that
\begin{equation*}
   \sum_{p \leq
X^{1/5}}\frac {\log^2 p}{p} \ll \log^2 X \; \mbox{and} \; \sum_{p
\leq X^{1/5}}\frac {\log^2 p}{p^2} \ll 1.
\end{equation*}
   Moreover, note that for a primitive cubic character $\chi$,
   $\chi(p^2)=\bar{\chi}(p)$ which implies that the values of the sums on the right-hand sides of \eqref{estimation1} and \eqref{estimation2} are the same.  Hence, it remains to estimate
\begin{equation} \label{neweq}
   \sum_{3 \neq p \leq Y} \left|\sum_{X \leq q \leq 2X} \ \sumstar_{\substack{\chi
\shortmod{q} \\ \chi^3 = \chi_0}} \chi(p) \right|^2.
\end{equation}
   Here, $Y$ is a parameter independent of $X$. From our discussion in Section \ref{sec2.4}, we can recast \eqref{neweq} in terms of the cubic
residue symbol as
\begin{equation*}
   \sum_{3 \neq p \leq Y} \left| \sumprime_{\substack{N(n) \in \mathcal{I}(X)
   \\ n \equiv 1\shortmod{3}}}\left(\frac{p}{n}\right)_3 \right|^2,
\end{equation*}
where the inner sum runs over square-free elements $n$ of
$\mathbb{Z}[\omega]$ that have no rational prime divisor and
$\mathcal{I}(Z)$ henceforth denotes the dyadic interval
\[ \mathcal{I}(Z) := [Z, 2Z] \]
for any real number $Z$. We now regard $(\frac {p}{\cdot})_3$ as a
ray class group character $\xi$ on $h_{(3p)}$ where we define
$\xi((n))=(\frac {p}{n})_3$.
   Now we remove the condition that $n$ has no rational prime divisor by using M\"{o}bius inversion
   (note that one can uniquely express any $n \in \mz[\omega]$ as $n= n_1 n_2$, where $n_1 \in \mathbb{N}$,
   and $n_2$ has no rational prime divisor), obtaining
\begin{equation*}
\sumprime_{\substack{N(n) \in \mathcal{I}(X) \\ n
 \equiv 1\shortmod{3}}}\left(\frac {p}{n}\right)_3 = \sum_{\substack{d \in \mathbb{Z}, d^2 \leq 2X \\ d \equiv 1\shortmod{3} \\ \gcd(d, p) =
1}} \mu_{\mn}(|d|) \sumprimeprime_{\substack{N(n) \in
\mathcal{I}(X/d^2)
\\ n \equiv 1 \shortmod{3}}} \left(\frac {p}{n}\right)_3 ,
\end{equation*}
where the double prime indicates that $nd$ is square-free (viewed
as an element of $\mz[\omega]$).  Here $\mu_{\mn}$ is the usual
M\"{o}bius function defined on $\mn$. Note as $d$ and $p$ are
coprime rational integers, it follows from the corollary to
Proposition 9.3.4. of \cite{I&R} that $(\frac {p}{d})_3=1$.
\newline

Since $d$ is automatically square-free (as an element of
$\mz[\omega]$), $nd$ being square-free simply means that $n$ is
square-free and $\gcd(n,d) = 1$. Now use M\"{o}bius again (writing
$\mu_{\omega}$ for the M\"{o}bius function on $\mz[\omega]$) to
detect the condition that $n$ is square-free, getting
\begin{equation}
\label{eq:f}
 \sumprime_{\substack{X \leq N(n) \leq 2X
   \\ n
 \equiv 1\shortmod{3}}}\left(\frac {p}{n} \right)_3 = \sum_{\substack{d \in \mathbb{Z}, d^2 \leq 2X \\ d \equiv 1\shortmod{3} \\  \gcd (d,p) = 1}}
 \mu_{\mn}(|d|) \sum_{\substack{N(l)^2 \leq 2X/d^2 \\ \gcd(l, d) = 1 \\ l \equiv 1 \shortmod{3}}} \mu_{\omega}(l)\left(\frac {p}{l^2}\right)_3
\sum_{\substack{N(n) \in \mathcal{I}(X/(N(l)d)^2) \\ \gcd(n, d) = 1 \\
n\equiv 1 \shortmod{3}}} \left(\frac {p}{n} \right)_3 .
\end{equation}
Here we changed variables via $n = l^2 n'$ and fixed $l$ up to
a unit by the condition $l \equiv 1 \hspace{0.1in} \shortmod{3}$
(note that in $\intz[\omega]$, every ideal coprime to $3$ has a
unique generator congruent to 1 modulo 3). \newline

We now apply a further M\"{o}bius to remove the condition $\gcd(n, d) =1$ to get
\begin{equation} \label{3.4}
\sum_{\substack{N(n) \in \mathcal{I}(X/(N(l)d)^2) \\ \gcd(n, d)
= 1 \\
n\equiv 1 \shortmod{3} }} \left( \frac {p}{n} \right)_3
=\sum_{\substack{e|d \\ e \equiv 1 \shortmod{3}}}
\mu_{\omega}(e) \left( \frac {p}{e} \right)_3\sum_{\substack{N(n) \in \mathcal{I}(X/N(e)N^2(l)d^2) \\
n\equiv 1 \shortmod{3}}} \left( \frac {p}{n} \right)_3.
\end{equation}
Now we need the following lemma, which establishes a
   P\'olya-Vinogradov type inequality for the cubic symbols:
\begin{lemma}
\label{PVbounds} Let $p \neq 3$ be a rational prime. Then we have
\begin{align}
\label{4.5}
\sum_{\substack{ N(n) \leq X  \\
n\equiv 1 \shortmod{3}}} \left(\frac {p}{n}\right)_3 \ll
 X^{1/3}p^{2/3} \log^2 p,
\end{align}
   where the sum runs over elements $n \in \mathbb{Z}[\omega]$.
\end{lemma}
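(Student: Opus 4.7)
The plan is to use cubic reciprocity in $\mathbb{Z}[\omega]$ to recast $\leg{p}{n}_3$ as a non-trivial Hecke character $\chi$ of fixed modulus dividing $p$, and then to combine the classical Pólya--Vinogradov bound for Hecke character sums with the trivial lattice-point bound.

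First I would apply cubic reciprocity to fix the ``bottom'' of the symbol. If $p \equiv 1 \pmod 3$, then $p = \pi \overline{\pi}$ in $\mathbb{Z}[\omega]$ with $\pi \equiv 1 \pmod 3$, and cubic reciprocity (together with $n \equiv 1 \pmod 3$) yields $\leg{p}{n}_3 = \leg{\pi}{n}_3 \leg{\overline{\pi}}{n}_3 = \leg{n}{\pi}_3 \leg{n}{\overline{\pi}}_3$. If $p \equiv 2 \pmod 3$, then $p$ is inert; using that $\leg{-1}{n}_3 = 1$ (since $-1 = (-1)^3$) and that $-p \equiv 1 \pmod 3$, reciprocity gives $\leg{p}{n}_3 = \leg{-p}{n}_3 = \leg{n}{-p}_3 = \leg{n}{p}_3$. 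In either case, the sum becomes $\sum_{N(n) \leq X,\, n \equiv 1 \pmod 3} \chi(n)$, where $\chi$ is a non-trivial Hecke ideal character of $\mathbb{Z}[\omega]$ whose conductor is an ideal of norm dividing $p^2$.

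Second, I would estimate this character sum by splitting according to the size of $X$ relative to $p$. When $X \leq p$, the trivial count $\#\{n \in \mathbb{Z}[\omega] : N(n) \leq X\} \ll X$ suffices, since $X = X^{1/3} \cdot X^{2/3} \leq X^{1/3} p^{2/3}$ in this regime. When $X > p$, I would invoke the classical Pólya--Vinogradov bound for Hecke characters of an imaginary quadratic field, which gives the $X$-uniform estimate $|\sum_{N(n) \leq X} \chi(n)| \ll p \log^2 p$; this is the analogue of the Dirichlet-character bound $\sqrt{q}\log q$, proved via Perron's formula combined with convexity estimates for the Hecke $L$-function $L(s, \chi)$. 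Since $p \leq X^{1/3} p^{2/3}$ exactly when $p \leq X$, the Pólya--Vinogradov bound delivers $X^{1/3} p^{2/3} \log^2 p$ in this range as well. Taking the minimum of the two regimes gives the claim.

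The main obstacle is obtaining the Pólya--Vinogradov estimate itself: unlike the one-dimensional Dirichlet situation, where completing the sum over an interval mod $q$ is elementary, the two-dimensional nature of the lattice $\mathbb{Z}[\omega]$ means that naive two-dimensional completion (via Fourier expansion on $(\mathbb{Z}/p)^2$, bounding each complete character-twisted sum by $p$ using Weil's theorem and summing the Fourier coefficients of the indicator of the disc $\{N(n) \leq X\}$) produces only a bound of order $p \sqrt{X}$, which is too weak in the regime $X > p$. Recovering the sharp $X$-uniform bound $p \log^2 p$ requires the analytic approach through $L(s, \chi)$; once this is in hand, the dichotomy between the trivial and the completed-sum bounds gives the lemma with essentially no further work.
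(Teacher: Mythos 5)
Your approach diverges from the paper's in two respects, one cosmetic and one serious.

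The cosmetic difference: you apply cubic reciprocity to flip $\leg{p}{n}_3$ into a character of $n$ of conductor built out of a prime above $p$. The paper skips this entirely: it observes directly that $n \mapsto \leg{p}{n}_3$ is a ray class character $\xi$ on $h_{(3p)}$ (after fixing generators by the normalization $n \equiv 1 \pmod 3$), and that $\xi$ is induced from a primitive character of conductor $(ap)$ with $a \mid 3$. Both routes deliver a primitive Hecke character of $\mathbb{Q}(\omega)$ with conductor of norm $O(p^2)$, so this step is fine either way.

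The serious difference is the character-sum estimate, and this is where there is a genuine gap. You assert a \emph{classical, $X$-uniform} Pólya--Vinogradov bound
$\sum_{N(n) \leq X}\chi(n) \ll p\log^2 p$
for the resulting Hecke character, and say it follows ``via Perron's formula combined with convexity estimates.'' This is exactly the point you yourself flag as the main obstacle, and the sketch you give does not close it. Perron plus the convexity bound at $\Re s = \tfrac12$ produces a bound of the shape $X^{2/3}Q^{1/6}$ (with $Q$ the conductor, here $\asymp p^2$); pushing the contour further left and using the functional equation improves the $X$-exponent at the cost of the $Q$-exponent, and the optimal point of this trade-off (at $\Re s = -1$) yields $X^{1/3}Q^{1/3}\log^2 Q$, i.e., $X^{1/3}p^{2/3}\log^2 p$. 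This $X^{1/3}$-dependent estimate is precisely the bound of Landau (and Sunley) that the paper invokes, namely
$\sum_{N(I)\leq X}\xi(I) \leq k^{1/(n+1)}\log^n(k)\cdot X^{(n-1)/(n+1)}$
with $n = 2$, $k = |N(\mathfrak{f})\,d_K| = 3ap^2$. There is no truncation choice in the Perron argument that eliminates the $X$-dependence entirely: the sharp-cutoff error $X/T$ always has to be balanced against a vertical integral that grows polynomially in $T$, and in degree two this balance never lands at $X^0$. (In degree one the same Landau formula gives $k^{1/2}\log k$ with no $X$, which is why the ordinary Dirichlet Pólya--Vinogradov is $X$-uniform; the $X^{1/3}$ factor is intrinsic to degree two.) So the estimate $p\log^2 p$ uniform in $X$ is neither ``classical'' nor a consequence of the argument you describe, and your proof would need a substitute for it.

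The good news is that the substitute is easier than what you were aiming for: the Landau--Sunley bound $X^{1/3}p^{2/3}\log^2 p$ \emph{is} the statement of the lemma, so no case analysis on $X$ versus $p$ and no $X$-uniform input are needed. Apply the ray class (or, if you prefer, reciprocity) reduction to a primitive character $\xi^*$ of conductor $(ap)$, remove the condition $\gcd(I,3)=1$ with M\"obius over the three ideals dividing $(3)$, and invoke Landau's estimate with $n=2$, $k = 3ap^2$. That gives the lemma in one step.
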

\begin{proof}
  As we mentioned above, we regard $(\frac {p}{\cdot})_3$ as a
ray class group character $\xi$ on $h_{(3p)}$ so that we can
recast the sum in \eqref{4.5} as
\begin{equation}
\label{4.6}
 \sum_{\substack{ N(n) \leq X  \\
n\equiv 1 \shortmod{3}}} \left(\frac {p}{n} \right)_3 =
\sum_{\substack{N(I) \leq X \\ \gcd(I, 3)=1}} \xi(I),
\end{equation}
   where the sum above runs over non-zero integral ideals $I \in
   \mz[\omega]$.
 It's easy to see that $\xi$ is induced by a
primitive character of conductor $(ap)$ for some $a | 3$.
Therefore, the condition $\gcd(I,3)=1$ imposed on the sum on the
right-hand side of \eqref{4.6}
   implies that the said sum remains unchanged if $\xi$ is replaced by $\xi^*$ (say), the primitive character that induces $\xi$.  We may
 therefore assume without loss of generality that $\xi$
   is primitive and we further use M\"{o}bius to detect the condition
$\gcd(I,3)=1$ in the second sum in \eqref{4.6} while noting that the
only ideals dividing $3$ are $(1)$, $(1-\omega)$ and $(3)$ to get
\begin{equation*}
\sum_{\substack{ N(n) \leq X  \\
n\equiv 1 \shortmod{3}}} \left(\frac {p}{n}\right)_3 =
\sum_{\substack{N(I) \leq X
\\ \gcd(I, 3)=1}} \xi (I)=\sum_{h = 1, 1-\omega, 3}
\mu_{\omega}(h) \xi (h)\sum_{N(I) \leq X/N(h)} \xi (I).
\end{equation*}

Now we quote a result of E. Landau \cite{Landau} (see also
\cite[Theorem 2]{Sunley}), which states that for an algebraic
number field $K$ of degree $n \geq 2$,
   $\xi$ any primitive ideal character of $K$ with conductor $\mathfrak{f}$, $k =|N(\mathfrak{f}) \cdot d_K|$ with $d_K$ being the
discriminant of $K$, we have for $X \geq 1$,
\begin{equation*}
   \sum_{N(I) \leq X}\xi(I) \leq k^{1/(n+1)}\log^n(k) \cdot X^{(n-1)/(n+1)},
\end{equation*}
where $I$ runs over integral ideas of $K$. \newline

We now identify $K=\ratq(\omega)$ with $n=2$ and $k=3ap^2$, $a=1$,
$3$ or $9$ to see that the sum on the right-hand side of
\eqref{4.6} is
\[ O \left( X^{1/3}p^{2/3}\log^2 p \right). \]
  This now completes the proof of the lemma.
\end{proof}

Applying Lemma \ref{PVbounds}, we can majorize the left-hand side expression in \eqref{3.4} as
\begin{align} \label{afterPV}
  \sum_{\substack{N(n) \in \mathcal{I}(X/(N(l)d)^2) \\ \gcd(n, d) = 1 \\
n\equiv 1 \shortmod{3} }} \left(\frac {p}{n}\right)_3 \ll \frac
{X^{1/3}p^{2/3}\log^2 p}{(N(l)d)^{2/3}} \sum_{e|d }\frac
{1}{\sqrt{N(e)}} \ll \frac {X^{1/3}p^{2/3}\log^2
p}{(N(l))^{2/3}d^{2/3-2\epsilon}},
\end{align}
  for any $\epsilon>0$. The last bound follows since we have $\#\{e \in
   \mathbb{Z}[\omega]: e|d \} \ll N(d)^{\epsilon}$.  From \eqref{afterPV} and \eqref{eq:f}, we have
\begin{equation*}
 \sumprime_{\substack{N(n) \in \mathcal{I}(X) \\ n
 \equiv 1\shortmod{3}}} \left( \frac {p}{n} \right)_3 \ll X^{1/3}p^{2/3}\log^2
p
 \sum_{\substack{d \in \mathbb{N}\\ d^2 \leq 2X}}\frac {1}{d^{2/3-2\epsilon}}
\sum_{\substack{N(l)^2 \leq 2X/d^2 \\ l \equiv 1
\shortmod{3}}}\frac {1}{(N(l))^{2/3}}.
\end{equation*}
   Note that it follows from \cite[Theorem 2]{Sunley} that
\begin{equation*}
   \sum_{\substack{N(l) \leq X \\ l \equiv 1
\shortmod{3}}}1 \ll X.
\end{equation*}
   We then deduce by partial summation that
\begin{equation*}
 \sum_{\substack{N(l)^2 \leq 2X/d^2 \\ l \equiv 1
\shortmod{3}}}\frac {1}{(N(l))^{2/3}} \ll \Big (\frac {X}{d^2}\Big
)^{1/6}.
\end{equation*}
   Therefore
\begin{equation*}
 \sumprime_{\substack{N(n) \in \mathcal{I}(X) \\ n
 \equiv 1\shortmod{3}}}\left( \frac {p}{n} \right)_3 \ll X^{1/2}p^{2/3}\log^2
p
 \sum_{\substack{d \in \mathbb{N}\\ d^2 \leq 2X}}\frac {1}{d^{1-2\epsilon}} \ll X^{1/2+3\epsilon}p^{2/3}\log^2 p
 .
\end{equation*}
Hence, we obtain
\begin{equation*}
   \sum_{3 \neq p \leq Y} \left|\sumprime_{\substack{N(n) \in \mathcal{I}(X)
   \\ n
 \equiv 1\shortmod{3}}}\left(\frac {p}{n} \right)_3 \right|^2 \ll X^{1+6\epsilon}\sum_{p \leq Y}p^{4/3}\log^4 p \ll X^{1+6\epsilon}Y^{7/3}\log^3 Y.
\end{equation*}
    Applying the above bound in the estimations \eqref{estimation1}
    and \eqref{estimation2}, we find that \eqref{01.5} and \eqref{primesquare} hold so
    long as $Y^{7/6} \leq X^{1/2-7\epsilon/2}$ and as $\epsilon$
    is arbitrary, the proof of Theorem
    \ref{mainthm} is completed.

\section{Proof of Theorem \ref{mainthmonK} and Theorem \ref{quarticthm}}

    The proofs of both Theorems \ref{mainthmonK} and
    \ref{quarticthm} are similar to that of Theorem
    \ref{mainthm} so we shall skip most of the details. For the proof of Theorem \ref{mainthmonK}, one can show, similar to the proof of \cite[Lemma 4.2]{Gu} that
\[ \# C_{(9)}(X) \sim c'X \; \mbox{as} \; X \to\infty\]
for some constant $c'$.  Recall that $C_{(9)}$ denotes the set of
cubic symbols $\chi_c = (\frac {\cdot}{c})_3$ with $c$
square-free, congruent to 1 modulo 9 and $X \leq N(c) \leq 2X$. We
then proceed as in the proof of Theorem \ref{mainthm} to see that
it suffices to show for any fixed $\epsilon>0$, we have
\begin{align} \label{cubicest}
  & \sum_{\substack{\mathfrak{p} \\3 \neq N(\mathfrak{p}) \leq Y}}
   \left| \ \sumstar_{\substack{N(c) \in \mathcal{I}(X) \\ c \equiv 1 \shortmod{9}}} \left(\frac {\mathfrak{p}}{c}\right)_3 \right|^2
   \ll X^{1+\epsilon}Y^{5/3}\log^3 Y,
\end{align}
where $Y$ is a parameter independent of $X$.  We now regard
$(\frac {\mathfrak{p}}{\cdot})_3$ as a ray class group character
$\xi$ on $h_{(3)\mathfrak{p}}$ where we set $\xi((c))=(\frac
{\mathfrak{p}}{c})_3$. Using the ray class characters on
$h_{(3)\mathfrak{p}}$ to detect the condition $c \equiv 1
\pmod{9}$ in the inner sum on the left-hand side of
\eqref{cubicest}, we get
\begin{equation*}
   \ \sumstar_{\substack{N(c) \in \mathcal{I}(X) \\ c \equiv 1 \shortmod{9}}} \left(\frac {\mathfrak{p}}{c}\right)_3 =
    \frac{1}{\#h_{(9)}} \sum_{\psi \shortmod{9}} \ \sumstar_{\substack{N(c) \in \mathcal{I}(X) \\ c \equiv 1 \shortmod{3}}}
    \psi((c))\xi((c)).
\end{equation*}

The estimation in \eqref{cubicest} follows, after using M\"{o}bius
to detect the condition that $c$ is square-free, from the
following estimation:
\[ \sum_{\substack{N(c) \leq X \\ c \equiv l \shortmod{9}}}
    \psi((c))\xi((c))
\ll  X^{1/3}N(\mathfrak{p})^{1/3} \log^2 N(\mathfrak{p}) \]
     where $l \equiv 1 \hspace{0.1in} \shortmod{3} \in
     \intz[\omega]$. One more application of the ray class characters on
$h_{(9)}$ shows that the above estimation follows from the
following:
\begin{equation}
\label{4.3}
  \sum_{\substack{N(I) \leq X \\ \gcd(I, 3)=1}}
    \psi(I)\psi'(I)\xi(I)
\ll  X^{1/3}N(\mathfrak{p})^{1/3} \log^2 N(\mathfrak{p}),
\end{equation}
   where the sum above runs over non-zero integral ideals $I \in
   \mz[\omega]$. The character $\psi \psi' \xi$ can be viewed as
   a ray class group character on $h_{(9)\mathfrak{p}}$ and our
   definition of $\xi$ implies that it is induced from a
   primitive character on $h_{(a)\mathfrak{p}}$ with $a|9$.  Consequently, as the
   condition $\gcd(I,3)=1$ is imposed on the summation in
   \eqref{4.3}, the value of this sum remains unaltered if $\psi \psi'\xi $ is replaced by the primitive character that induces it. We may
   therefore without loss of generality assuming $\psi \psi' \xi$
   is primitive and an application of the M\"obius function as in the
   proof of Lemma \ref{PVbounds} allows us to obtain the desired
   bound in \eqref{4.3}. \newline

For the proof of Theorem \ref{quarticthm}, one can show, following
the approach in \cite{David}, that
\[ \# Q(X)  \sim dX \; \mbox{as} \; X \to \infty \]
for some constant $d$ as $X \rightarrow \infty$.  The rest of the
proof goes in a similar fashion as that of Theorem \ref{mainthm}.

\section{Notes}

We remark here that it is conceivable that results along the lines of Theorems \ref{mainthm}, \ref{mainthmonK} and \ref{quarticthm} can be proved using a simpler approach involving mean-value estimates for sums of characters of a fixed order.  This method was used in \cite{Miller1}.  The afore-mentioned mean-value estimate for quadratic character sums is due to M. Jutila \cite[Lemma 5]{Jut}, but the analogous results for characters of orders higher than two which would be needed here do not seem to be available.  Moreover, if good mean value estimates can be obtained for cubic and quartic characters, one would expect that the support of $\hat{f}$ in Theorems \ref{mainthm}, \ref{mainthmonK} and \ref{quarticthm} can be significantly widened. \newline

It would also be interesting to consider the one level density of low-lying zeros of families of Dirichlet $L$-function for characters of orders larger than 4.  But the relation between higher order residue symbols and $n$-th order primitive Dirichlet characters would be more difficult to describe. \newline

\subsection*{Acknowledgments}  During this work, P. G. was
supported by some postdoctoral research fellowships at Nanyang
Technological University (NTU) and L. Z. by an Academic Research
Fund Tier 1 Grant at NTU.  Both authors wish to thank Stephan
Baier for some interesting discussions concerning some parts of
this paper and Matt Young for making them aware of the estimates
in \cite{Landau} which led to some improvements.  Finally, the authors would also like to thank the referee of his/her many comments and suggestions.

\bibliography{biblio}
\bibliographystyle{amsxport}

\vspace*{.5cm}

\noindent\begin{tabular}{p{8cm}p{8cm}}
Div. of Math. Sci., School of Phys. \& Math. Sci., & Div. of Math. Sci., School of Phys. \& Math. Sci., \\
Nanyang Technological Univ., Singapore 637371 & Nanyang Technological Univ., Singapore 637371 \\
Email: {\tt penggao@ntu.edu.sg} & Email: {\tt lzhao@pmail.ntu.edu.sg} \\
\end{tabular}

\end{document}